\documentclass{amsart}

\usepackage{amssymb}
\usepackage{amsthm}
\usepackage{amsmath}
\usepackage[all]{xy}
\usepackage{graphicx}
\usepackage{pifont}
\usepackage{txfonts}

\theoremstyle{plain}
\newtheorem{theorem}{Theorem}[section]
\newtheorem*{theorem*}{Theorem}
\newtheorem*{Main}{Twisted Alexander polynomial with the adjoint action}
\newtheorem{proposition}[theorem]{Proposition}
\newtheorem{lemma}[theorem]{Lemma}
\theoremstyle{definition}
\newtheorem{definition}[theorem]{Definition}
\theoremstyle{remark}
\newtheorem{remark}[theorem]{Remark}

\def\co{\colon\thinspace}
\newcommand{\im}{\mathop{\mathrm{Im}}\nolimits}


\newcommand{\Z}{{\mathbb Z}}

\newcommand{\C}{{\mathbb C}}


\newcommand{\SL}[1][2]{{\mathrm{SL}_{#1}(\C)}}
\newcommand{\GL}{\mathrm{GL}}
\newcommand{\sll}{\mathfrak{sl}_2(\C)}
\newcommand{\matrixE}{\begin{pmatrix} 0 & 1 \\ 0 & 0\end{pmatrix}}
\newcommand{\matrixH}{\begin{pmatrix} 1 & 0 \\ 0 & -1\end{pmatrix}}
\newcommand{\matrixF}{\begin{pmatrix} 0 & 0 \\ 1 & 0\end{pmatrix}}

\newcommand{\trace}{{\rm tr}\,}

\newcommand{\I}{\mathbf{1}}
\newcommand{\bm}[1]{\mbox{\boldmath{$#1$}}}

\newcommand{\proj}{\C \mathrm{P}^1}
\newcommand{\knotexterior}{E_K}
\newcommand{\boundaryTorus}{\partial \knotexterior}
\newcommand{\knotgroup}{\pi_1(E_K)}
\newcommand{\univcover}{\widetilde{\knotexterior}}
\newcommand{\Hyp}{{\rm Hyp}}
\newcommand{\Para}{{\rm Para}}
\newcommand{\twistedAlex}[2]{\Delta_{#1}^{\alpha \otimes #2}(t)}
\newcommand{\twistedAlexAd}[2]{\Delta_{#1}^{\alpha \otimes Ad \circ #2}(t)}
\newcommand{\twistingOp}[1][\cdot]{(-1)^{[#1]}}

\newcommand{\assoc}[1]{{}^a\!#1}

\newcommand{\ie}{i.e.,\,}
\begin{document}


\title{On the twisted Alexander polynomial for metabelian representations into $\SL$}

\author{Yoshikazu Yamaguchi}

\address{Department of Mathematics,
  Akita University
  1-1 Tegata-Gakuenmachi, Akita, 010-8502, Japan}
\email{shouji@math.akita-u.ac.jp}

\date{\today}

\keywords{the twisted Alexander polynomial; binary dihedral representations; 
  metabelian representations; knots; $\SL$-representations}

\subjclass[2000]{Primary: 57M27, 57M05, Secondary: 57M12}

\begin{abstract}
  We observe the twisted Alexander polynomial for
  metabelian representations of knot groups into $\SL$ and study
  relations to the characterizations of metabelian representations in the character
  varieties.  We give a factorization of the twisted Alexander polynomial for
  irreducible metabelian representations with the adjoint action on $\sll$, in which
  the Alexander polynomial and the twisted Alexander polynomial appear as factors.
  We also show several explicit examples.  
\end{abstract}


\maketitle

\section{Introduction}
The purpose of this note is to explain explicit forms of the twisted
Alexander polynomial of knot exteriors $\knotexterior$
for metabelian representations mapping
the knot groups into $\SL$.  The twisted Alexander polynomial is a
refinement of the Alexander polynomial of knots defined by using group
homomorphisms from the knot groups into linear groups as
in~\cite{Lin01,Wada94}.  It is of interest to consider the twisted
Alexander polynomial for linear representations which send knot groups
to non--abelian subgroups in $\SL$.  We can regard irreducible
metabelian representations $\rho$ as the simplest ones which have
the non--abelian images.  
The author and F.~Nagasato have given 
a characterization of metabelian representations in the $\SL$-character varieties 
of knot groups.
This characterization is summarized as 
\begin{enumerate}
\item
  \label{item:prop_fixed_points}
  the conjugacy classes of irreducible metabelian representations
  form the fixed point set under an involution of the character variety of a knot group;
\item
  \label{item:prop_corres_branched}
  every conjugacy class of an irreducible  metabelian representation
  corresponds to a non--trivial abelian representation
  of the fundamental group of the double branched cover over $S^3$.
\end{enumerate}

From the viewpoint of a fixed point under the involution in the
$\SL$-character variety, there exists the induced linear isomorphism
on the tangent spaces of the character variety at the fixed point.
Under the identification of the twisted cohomology group and the
tangent space, this linear isomorphism is given by conjugation
between irreducible metabelian representations.  To see this, we will
make a decomposition of $Ad \circ \rho$ into the direct sum of a
$1$-dimensional representation and a $2$-dimensional one. 
The $2$-dimensional direct summand is
defined by another irreducible metabelian representation $\assoc{\rho}$.
The twisted homology group defined by $\assoc{\rho}$ gives the tangent space with the linear transformation of the order $2$,
namely the scalar multiplication by $-1$.

From the decomposition of $Ad \circ \rho$, 
the twisted Alexander polynomial $\twistedAlexAd{\knotexterior}{\rho}$ turns into the product of 
the rational function $\Delta_K(-t) / (-t-1)$ and
the twisted Alexander polynomial
$\Delta^{\alpha \otimes \, \assoc{\rho}}_{\knotexterior} \big( \big(\! \sqrt{-1}\,\big) t \big)$
where $\Delta_K(t)$ is the Alexander polynomial of $K$.
The property~\eqref{item:prop_fixed_points} that the conjugacy class of $\rho$ is fixed by the involution 
impliess the symmetry of the factor $\twistedAlex{\knotexterior}{\assoc{\rho}}$,
which is expressed as
$\twistedAlex{\knotexterior}{\assoc{\rho}} = \Delta_{\knotexterior}^{\alpha \otimes \assoc{\rho}}(-t)$.
Moreover we can show that $\twistedAlex{\knotexterior}{\assoc{\rho}}$ has
only even degree terms.  This is derived from the
property~\eqref{item:prop_corres_branched} for irreducible metabelian
representations and the work of P.~Kirk, C.~Livingston and
C.~Herald~\cite{HeraldKirkLivingston2010}.

Finally, we will touch on the divisibility of the twisted Alexander polynomial for
$Ad \circ \rho$.
From~\cite{HeraldKirkLivingston2010}, if a knot $K$ is slice,
then $\twistedAlex{\knotexterior}{\assoc{\rho}}$ is a Laurent polynomial 
$a f(t^2) \bar{f}(t^{-2}) (t^2+1)$ where $a$ is a complex number and $f$ is a Laurent polynomial
with complex coefficients.
In this case, the factor $t^2+1$ makes
the twisted Alexander polynomial $\twistedAlexAd{\knotexterior}{\rho}$
be a Laurent polynomial.
However we can see several examples 
for non--slice knots $K$
and irreducible metabelian representations $\rho$
which give that  
the twisted Alexander polynomial $\twistedAlexAd{\knotexterior}{\rho}$ are also Laurent polynomials.
We give a sufficient condition on $\rho$ for $\twistedAlexAd{\knotexterior}{\rho}$ to be 
a Laurent polynomial, which is referred as {\it longitude--regular}.

To summarize, we will show the following results:
\begin{Main}[Theorem~\ref{thm:relation_twistedAlexPolys}]
Suppose that an irreducible metabelian $\SL$-representation  $\rho$ is longitude--regular.
Then the twisted Alexander polynomial
$\twistedAlexAd{\knotexterior}{\rho}$ is expressed as 
$$
  \twistedAlexAd{\knotexterior}{\rho}
  = (t-1) \cdot \Delta_K(-t) \cdot
  \frac{
    \Delta^{\alpha \otimes \, \assoc{\rho}}_{\knotexterior} \big( \big(\! \sqrt{-1}\,\big) t \big)
  }{
    t^2-1
  }.
$$
Moreover the factor 
$\Delta^{\alpha \otimes \, \assoc{\rho}}_{\knotexterior} \big( \big(\! \sqrt{-1}\,\big) t \big) / (t^2-1)$
turns into a Laurent polynomial in which every term has even degree.
\end{Main}

\section{Preliminaries}
\label{section:preliminaries}
\subsection{Review on the twisted Alexander polynomial}
Let $K$ be a knot in the $3$-dimensional sphere $S^3$. We denote
the knot group by $\knotgroup$ where $\knotexterior$ is the knot exterior obtained by
removing an open tubular neighbourhood of $K$ from $S^3$.
In this paper, we adopt the definition of the twisted Alexander polynomial of $\knotexterior$
by using Fox differential calculus,
which is due to M.~Wada~\cite{Wada94}.

We deal with {\it irreducible} $\SL$-representations from
$\knotgroup$ into $\SL$ and we will use the notation $\rho$ to denote them.
Here irreducible means that $\rho(\knotgroup)$ has no proper invariant line in $\C^2$.
We also
consider the compositions of $\SL$-representations with the adjoint
action on the Lie algebra.  The adjoint action is the
conjugation on the Lie algebra $\sll$ by elements in $\SL$:
\begin{align*}
  Ad \co \SL &\to \mathrm{Aut}(\sll) \\
  A & \mapsto Ad_A\co\bm{v} \mapsto A\bm{v}A^{-1}
\end{align*}
where $\sll$ is regarded as a vector space over $\C$.
The Lie algebra $\sll$ is generated by the following trace--free matrices:
\begin{equation}
  \label{eqn:basis_sll}
  E = \matrixE,\quad 
  H = \matrixH, \quad
  F = \matrixF.
\end{equation}
\begin{remark}
  The adjoint action $Ad_A$ has the eigenvalues $\xi^2$, $\xi^{-2}$
  and $1$ when an element $A$ in $\SL$ has eigenvalues $\xi^{\pm 1}$.
\end{remark}

\subsubsection{Definition of the twisted Alexander polynomial}
We review the definition of the twisted Alexander polynomial of a knot
$K$ with a representation of the knot group $\knotgroup$.  It is known
that every knot group $\knotgroup$ is finitely presentable and expressed
as follows:
\begin{equation}
  \label{eqn:GenerealPresentationKnotGroup}
  \knotgroup
  =
  \langle
  g_1, \ldots, g_k \,|\, r_1, \ldots, r_{k-1}
  \rangle.
\end{equation}
We choose an $\SL$-representation $\rho$ of $\knotgroup$ and denote by
$\alpha$ the abelianization homomorphism of $\knotgroup$:
$$
\alpha \co \knotgroup \to
\langle t \rangle = \knotgroup / [\knotgroup, \knotgroup] \simeq H_1(\knotexterior;\Z) 
$$
where $\knotgroup / [\knotgroup, \knotgroup]$ is regarded as a
multiplicative group.  Let $\Phi_\rho$ be the $\Z$-linear extension of
$\alpha \otimes \rho$ on the group ring $\Z [\knotgroup]$ as
\begin{align*}
  \Phi_\rho\co \Z [\knotgroup] &\to \mathrm{M}_2(\C[t^{\pm 1}]) \\
  \sum_i a_i \gamma_i & \mapsto \sum_i a_i \alpha(\gamma_i) \otimes \rho(\gamma_i)
\end{align*}
where $\mathrm{M}_2(\C[t^{\pm 1}]) = \C [t^{\pm 1}] \otimes_{\C}
\mathrm{M}_2(\C)$ and we identify $\mathrm{M}_2(\C[t^{\pm 1}])$ with
the set of matrices whose entries are elements in $\C[t^{\pm 1}]$.
We also denote by $\Phi_{Ad \circ \rho}$ the $\Z$-linear extension of
$\alpha \otimes Ad \circ \rho$.

\begin{definition}
  \label{def:twistedAlexander}
  Let $K$ be a knot and the knot group $\knotgroup$ be represented as
  in~\eqref{eqn:GenerealPresentationKnotGroup}.  We suppose that
  $\alpha(g_l) \not = 1$.  Then for an $\SL$-representation of
  $\knotgroup$, we define the twisted Alexander polynomial
  $\twistedAlex{\knotexterior}{\rho}$ as
  \begin{equation}
    \label{eqn:DefTwistedAlexander}
    \twistedAlex{\knotexterior}{\rho} =
    \frac{%
      \det \left(
      \Phi_\rho \left(
      \frac{\displaystyle \partial r_i}{\displaystyle \partial g_j}
      \right)
      \right)_{\substack{1 \leq i \leq k-1, \\  1 \leq j \leq k, j \not = l}}
    }{%
      \det (\Phi_\rho (g_l - 1))}
  \end{equation}
  where the $\partial r_i / \partial g_j$ is a linear combination of
  words in $g_1, \ldots, g_k$, given by Fox differential of $r_i$ by
  $g_j$.  We also define the twisted Alexander polynomial
  $\twistedAlexAd{\knotexterior}{\rho}$ for the composition $Ad \circ
  \rho$ by $\Phi_{Ad \circ \rho}$
  up to a factor $\pm t^{m}$ where $m \in \Z$.
\end{definition}

\begin{remark}
  The assumption that
  $\alpha(g_l) \not = 1$ is a sufficient condition for the denominator
  to be non--zero.
\end{remark}
\begin{remark}
  The twisted Alexander polynomial of $K$ for an $\SL$-representation
  of $\knotgroup$ does not depend on the choices of a presentation of
  $\knotgroup$.  Moreover the twisted Alexander polynomial has the
  invariance under the conjugation of representations.  
\end{remark}
We refer to~\cite{Kitano, Wada94} for the details on the
well--definedness.  We choose the last generator as $g_l$ in
Definition~\ref{def:twistedAlexander} for our
explicit examples in Section~\ref{section:examples}.

\subsection{Review on metabelian representations of knot groups}
\label{subsection:MetabelianRep}
We are interested in irreducible {\it metabelian} representations.
This Subsection briefly reviews known results about metabelian representations.
\begin{definition}
  A representation $\rho$ is {\it metabelian} if the image
  by $\rho$ of the commutator subgroup $[\knotgroup, \knotgroup]$ is an
  abelian subgroup in $\SL$.
\end{definition}

\begin{remark}
  \label{remark:maximalAbelianGroup}
  It is known that every maximal abelian subgroup  in $\SL$ is conjugate to either
  the abelian subgroup consisting of hyperbolic elements or parabolic ones, \ie
  $$
  \Hyp :=
  \left\{\left.
      \begin{pmatrix}
        z & 0 \\
        0 & z^{-1}
      \end{pmatrix}
      \,\right|\,
    z \in \C \setminus \{0\}
  \right\}
  \quad \hbox{or}\quad
  \Para :=
  \left\{\left.
      \begin{pmatrix}
        \pm 1 & w \\
        0 & \pm 1
      \end{pmatrix}
      \,\right|\,
    w \in \C
  \right\}.
  $$
\end{remark}

X-S.~Lin~\cite{Lin01} introduced a useful
presentation of knot groups to express irreducible metabelian representations
(see Definition~\ref{def:LinPresentation}),
which is referred as a {\it Lin presentation}.
Using  a Lin presentation,
we will see that the composition $Ad \circ \rho$ contains 
another metabelian representation $\rho'$ as a direct summand
in Proposition~\ref{prop:embedding_rep}.
\begin{definition}[Lemma~$2.1$ in~\cite{Lin01}]
  \label{def:LinPresentation}
  We suppose that a free Seifert surface $S$ has genus $g$.
  Let $\pi_1(S^3 \setminus N(S))$ be generated by $x_1, \ldots, x_{2g}$ where 
  each $x_i$ corresponds to the core of $1$-handle in the handlebody $S^3 \setminus N(S)$.
  We denote by $\mu$ a meridian on $\partial \knotexterior$.
  Then the knot group $\knotgroup$ is expressed as
  \begin{equation}
    \label{eqn:def_LinPresentation}
    \knotgroup =
    \langle
    x_1, \ldots, x_{2g}, \mu
    \,|\,
    \mu a_i^+ \mu^{-1} = a_i^-, \,
    i=1, \ldots, 2g
    \rangle
  \end{equation}
  where $a_i^{\pm}$ are words in $x_1, \ldots, x_{2g}$
  obtained by pushing up and pushing down the spine $\lor_{i=0}^{2g}
  a_i$ of $S$ along the normal direction in $N(S)$.  We call the
  presentation in \eqref{eqn:def_LinPresentation} the Lin presentation
  associated with $S$.
\end{definition}
For example, Figure~\ref{fig:SpineTrefoil} shows a free
Seifert surface of the left handed trefoil knot.  The closed loops 
$x_1$ and $x_2$ correspond to the core of $1$-handles in the
complement of this Seifert surface.
The spine $\lor_{i=1}^2a_i$ is illustrated in
Figure~\ref{fig:SpineTrefoil}. 
It is easy to see that $a_1^+$, $a_1^-$, $a_2^+$
and $a_2^-$ are homotopic to $x_1$, $x_1 x_2^{-1}$, $x_2^{-1}x_1$ and
$x_2^{-1}$.

\begin{figure}[!ht]
  \begin{center}
    \includegraphics[scale=.5]{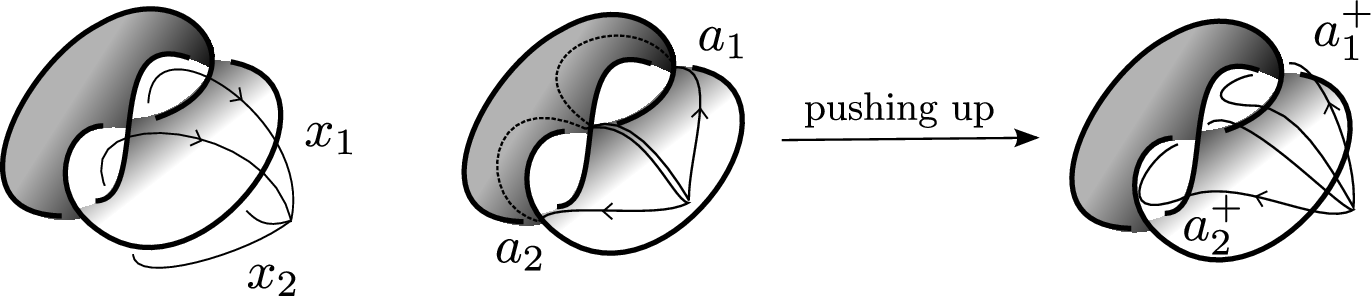}
  \end{center}
  \caption{The spine of the free Seifert surface}\label{fig:SpineTrefoil}
\end{figure}

We see explicit forms of irreducible metabelian representation
via a Lin presentation of $\knotgroup$, which is due to Lin~\cite{Lin01} and 
F.~Nagasato~\cite{nagasato07:_finit_of_section_of_sl}.
\begin{proposition}[Proposition~$1.1$ in~\cite{nagasato07:_finit_of_section_of_sl}]
  \label{prop:explicitIrredMetabelian}
  We fix a Lin presentation of $\knotgroup$ of the form
  $$
  \knotgroup = \langle
  x_1, \ldots, x_{2g}, \mu \,|\,
  \mu a_i^{+} \mu^{-1} = a_i^{-}, i=1, \ldots, 2g
  \rangle.
  $$

  Then 
  every irreducible metabelian $\SL$-representation $\rho$ 
  is conjugate to one given by the following correspondences: 
  \begin{equation}
    \label{eqn:ExplicitCorrespondenceMetabelian}
    x_i \mapsto
    \begin{pmatrix}
      z_i & 0 \\
      0 & z_i^{-1}
    \end{pmatrix},
    \quad
    \mu \mapsto
    \begin{pmatrix}
      0 & 1 \\
      -1 & 0
    \end{pmatrix}
  \end{equation}
  where each $z_i$ is a root of unity whose order is a divisor of $|\Delta_K(-1)|$.
\end{proposition}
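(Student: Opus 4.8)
The plan is to read off the structure of $\rho$ from the two special features of a Lin presentation: the generators $x_1,\dots,x_{2g}$ are null--homologous, so $\alpha(x_i)=1$ and each $x_i$ lies in $[\knotgroup,\knotgroup]$, whereas the meridian satisfies $\alpha(\mu)=t$. Since $\rho$ is metabelian, the subgroup $A:=\rho([\knotgroup,\knotgroup])$ is abelian, and all $\rho(x_i)$ lie in $A$. First I would note that if every $\rho(x_i)$ were central (\ie in $\{\pm\I\}$), then $\rho(\knotgroup)=\langle\rho(\mu),\pm\I\rangle$ would be abelian, hence reducible; so irreducibility forces $A$ to be a \emph{non--central} abelian subgroup. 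By Remark~\ref{remark:maximalAbelianGroup} it then lies, after a conjugation, in $\Hyp$ or in $\Para$.

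The structural heart is a normalizer argument, made possible by the fact that $[\knotgroup,\knotgroup]$ is normal, so $\rho(\mu)$ normalizes $A$. A non--central element of $\Para$ has the single invariant line $\langle\trans{(1,0)}\rangle$, hence so does any non--central subgroup of $\Para$; its normalizer must preserve that line, forcing $\rho(\mu)$ to be upper triangular and $\rho$ reducible — excluded. Therefore $A\subset\Hyp$, and $\rho(\mu)$ lies in the normalizer of the diagonal torus, which is the torus together with its anti--diagonal coset. If $\rho(\mu)$ were diagonal the image would be abelian, so $\rho(\mu)$ is anti--diagonal; conjugating by a diagonal matrix (which preserves $\Hyp$ and keeps each $\rho(x_i)$ diagonal) I rescale its entry to obtain $\rho(\mu)=\begin{pmatrix}0&1\\-1&0\end{pmatrix}$. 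This yields exactly the asserted form $x_i\mapsto\mathrm{diag}(z_i,z_i^{-1})$, $\mu\mapsto\begin{pmatrix}0&1\\-1&0\end{pmatrix}$.

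It remains to constrain the $z_i$. Writing $w=\rho(\mu)$, conjugation by $w$ inverts every diagonal matrix, so applying $\rho$ to the relations $\mu a_i^{+}\mu^{-1}=a_i^{-}$ gives, with $p_{ij}$ and $q_{ij}$ the exponent sums of $x_j$ in $a_i^{+}$ and $a_i^{-}$, the multiplicative system $\prod_j z_j^{\,p_{ij}+q_{ij}}=1$ for $i=1,\dots,2g$. I would then identify the integer matrix $M=(p_{ij}+q_{ij})$ with the symmetrized Seifert matrix $V+\trans{V}$ of the free Seifert surface $S$, using that abelianizing the pushoffs $a_i^{\pm}$ recovers the Seifert form. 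Since $\det(V-t\,\trans{V})\doteq\Delta_K(t)$, this gives $\det M=\pm\Delta_K(-1)\neq 0$. A nondegenerate integer matrix has finite multiplicative kernel in $(\C^{\times})^{2g}$: the solution set is a finite group isomorphic to $\coker(M)$, of order $|\det M|=|\Delta_K(-1)|$. Hence each $z_i$ is a root of unity whose order divides $|\Delta_K(-1)|$, consistent with the correspondence in~\eqref{item:prop_corres_branched} to $H_1$ of the double branched cover.

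The cleanest steps are the normalizer dichotomy and the reduction to diagonal form, both forced by irreducibility alone. The delicate point I expect is the bookkeeping identifying $M=(p_{ij}+q_{ij})$ with $V+\trans{V}$, and thereby $\det M$ with $\Delta_K(-1)$: this requires tracking the precise conventions relating the Lin--presentation words $a_i^{\pm}$ to the Seifert matrix (orientations, the transpose, and the choice of spine), so as to confirm $|\det M|=|\Delta_K(-1)|$ rather than some variant. Once that identification is secured, the root--of--unity conclusion is immediate from the finiteness of $\coker(M)$.
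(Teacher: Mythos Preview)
The paper does not supply its own proof of this proposition; it is stated with attribution to Nagasato's paper~\cite{nagasato07:_finit_of_section_of_sl} and used thereafter as a black box. So there is no in--paper argument to compare against.

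That said, your proposed argument is a correct and complete proof. The reduction to diagonal/anti--diagonal form via the normalizer dichotomy is exactly the standard mechanism (this is essentially Lin's original observation underlying the presentation), and your handling of the irreducibility constraints is clean. The identification of the exponent--sum matrix with $V+\trans{V}$ is indeed the only place requiring care: in the Lin setup the loops $x_j$ are Alexander--dual to the spine curves $a_j$, so the abelianized pushoffs $a_i^{\pm}$ record precisely the Seifert linking numbers, giving $(p_{ij})=V$ and $(q_{ij})=\trans{V}$ (or the reverse, depending on the pushoff convention). With $\Delta_K(t)\doteq\det(V-t\,\trans{V})$ this yields $\det(V+\trans{V})=\pm\Delta_K(-1)$, and the multiplicative kernel computation then pins down the orders of the $z_i$ exactly as you say. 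One small cosmetic point: you might note explicitly that the solution group $\ker\big((\C^\times)^{2g}\xrightarrow{M}(\C^\times)^{2g}\big)\simeq\coker(M\colon\Z^{2g}\to\Z^{2g})$ is naturally identified with $H_1$ of the double branched cover, which is the content of Proposition~\ref{prop:fixed_points}\,(ii); this makes the divisibility by $|\Delta_K(-1)|$ transparent without chasing determinants.
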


In the set of irreducible $\SL$-metabelian representations, 
metabelian representations are characterized as follows.
\begin{proposition}[Proposition~$3$ and Theorem~$1$ in~\cite{NagasatoYamaguchi}]
  \label{prop:fixed_points}
  The number of conjugacy classes of irreducible metabelian
  $\SL$-representations is given by $(|\Delta_K(-1)| - 1)/2$.  These
  conjugacy classes form the fixed point set in the set of conjugacy
  classes of irreducible $\SL$-representations $\rho$ under the
  involution induced by the following correspondence:
  $$
  \rho \mapsto (-\I)^{[\,\cdot\,]} \rho
  $$
  where $(-\I)^{[\,\cdot\,]} \in \{\pm \I\}$ and $[\,\gamma\,]$ denotes
  the homology class in $H_1(\knotexterior;\Z)\simeq \Z$
  for any $\gamma \in \knotgroup$.

  The conjugacy class of such representations $\rho$ correspond bijectively
  to the conjugacy classes of abelian representations for the double branched cover over $S^3$.
\end{proposition}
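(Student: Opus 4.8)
The plan is to deduce the three assertions from the explicit normal form of Proposition~\ref{prop:explicitIrredMetabelian} together with the combinatorics of the Lin presentation in Definition~\ref{def:LinPresentation}. Fix the free Seifert surface $S$ of genus $g$ and a Seifert matrix $V$ of $K$ associated with $S$. Throughout I use the classical facts that $V-\trans V$ is unimodular --- so $H_1(\knotexterior;\Z)\cong\Z$ with $[\mu]=1$ and $[x_i]=0$ for all $i$ --- that $|\det(V+\trans V)|=|\Delta_K(-1)|$, and that $|\Delta_K(-1)|$ is odd.

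\emph{Counting the conjugacy classes.} By Proposition~\ref{prop:explicitIrredMetabelian} every irreducible metabelian representation is conjugate to the representation $\rho_z$ sending $x_i\mapsto\mathrm{diag}(z_i,z_i^{-1})$ and $\mu\mapsto\begin{pmatrix}0&1\\-1&0\end{pmatrix}$. Since each $\rho_z(x_i)$ is diagonal and conjugation by $\rho_z(\mu)$ interchanges the two diagonal entries, feeding $\rho_z$ into the Lin relations $\mu a_i^+\mu^{-1}=a_i^-$ and comparing diagonal entries turns them into a multiplicative linear system whose coefficient matrix is the symmetrized Seifert matrix: $\rho_z$ is a well-defined representation exactly when $\prod_j z_j^{(V+\trans V)_{ij}}=1$ for all $i$. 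Hence the admissible tuples $z=(z_1,\dots,z_{2g})$ form the finite abelian group $\mathrm{Hom}\big(\Z^{2g}/(V+\trans V)\Z^{2g},\C^{\times}\big)$, of order $|\det(V+\trans V)|=|\Delta_K(-1)|$, and every $z_i$ is automatically a root of unity. Now $\rho_z$ fails to be irreducible precisely when $\rho_z(\mu)$ and all $\rho_z(x_i)$ share an eigenvector, i.e. when every $z_i=\pm1$; since $|\Delta_K(-1)|$ is odd, $V+\trans V$ is invertible modulo $2$, so the only such tuple is the trivial one. This leaves $|\Delta_K(-1)|-1$ tuples giving irreducible metabelian representations. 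Finally, a direct analysis of which matrices in $\SL$ both centralize $\rho_z(\mu)$ and permute the two coordinate lines shows that $\rho_z$ and $\rho_{z'}$ are $\SL$-conjugate if and only if $z'=z$ or $z'=z^{-1}$, the nontrivial identification being realized by conjugation by $\rho_z(\mu)$. Because the trivial tuple is the unique admissible $z$ with $z=z^{-1}$, the $|\Delta_K(-1)|-1$ nontrivial tuples fall into $(|\Delta_K(-1)|-1)/2$ conjugacy classes.

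\emph{The fixed-point description.} Since $\gamma\mapsto(-1)^{[\gamma]}$ is a homomorphism $\knotgroup\to\{\pm\I\}\subset\SL$, the rule $\rho\mapsto(-\I)^{[\,\cdot\,]}\rho$ sends $\SL$-representations to $\SL$-representations and, being of order two on conjugacy classes, is an involution. For ``metabelian $\Rightarrow$ fixed'', use $[x_i]=0$ and $[\mu]=1$: then $(-\I)^{[\,\cdot\,]}\rho_z$ fixes each $\rho_z(x_i)$ and sends $\mu$ to $-\rho_z(\mu)$, and conjugation by $\mathrm{diag}\!\big(\sqrt{-1},-\sqrt{-1}\big)\in\SL$ carries $\rho_z$ onto $(-\I)^{[\,\cdot\,]}\rho_z$. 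For the converse, suppose $\rho$ is irreducible and $P\rho(\gamma)P^{-1}=(-1)^{[\gamma]}\rho(\gamma)$ for some $P\in\SL$. Restricting to $[\knotgroup,\knotgroup]=\ker\alpha$ shows $P$ centralizes $\rho([\knotgroup,\knotgroup])$. If $\rho([\knotgroup,\knotgroup])$ were irreducible, $P$ would be scalar and evaluating the identity on a meridian would force $\rho(\mu)=-\rho(\mu)$, which is impossible; if it were reducible but non-abelian it would have a unique invariant line, which, being canonically attached to the normal subgroup $[\knotgroup,\knotgroup]$, would be preserved by all of $\rho(\knotgroup)$, contradicting the irreducibility of $\rho$. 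Hence $\rho([\knotgroup,\knotgroup])$ is abelian, i.e. $\rho$ is metabelian. Combined with the count, the fixed-point set consists of exactly the $(|\Delta_K(-1)|-1)/2$ conjugacy classes of irreducible metabelian representations.

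\emph{The correspondence with the double branched cover.} Let $\Sigma$ be the double branched cover of $S^3$ along $K$; its first homology is presented by $V+\trans V$ and has order $|\Delta_K(-1)|$. By the first step an admissible tuple $z$ is nothing but an element of $\mathrm{Hom}(H_1(\Sigma;\Z),\C^{\times})$, i.e. an abelian $\SL$-representation of $\pi_1(\Sigma)$ with diagonal image; more intrinsically, $\trace\rho(\mu)=0$ forces $\rho(\mu^2)=-\I$, so the restriction of $\rho$ to the index-two subgroup $\alpha^{-1}(2\Z)$ has abelian image and factors through $\pi_1(\Sigma)$, which produces the bijection, the symmetry $z\leftrightarrow z^{-1}$ matching $\SL$-conjugacy of diagonal representations of $\pi_1(\Sigma)$; discarding the trivial class on each side gives the stated one-to-one correspondence. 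The step I expect to be the main obstacle is the one hidden in ``comparing diagonal entries'' above: making precise that the abelianized Lin relations are governed by the \emph{symmetrized} Seifert matrix $V+\trans V$ and identifying $\Z^{2g}/(V+\trans V)\Z^{2g}$ with $H_1(\Sigma;\Z)$ --- that is, translating the ``push up / push down'' description of the words $a_i^{\pm}$ into linking numbers --- together with pinning down exactly which pairs of normal-form representations are $\SL$-conjugate. Once that dictionary is in place, the reducibility count, the fixed-point characterization, and the correspondence with abelian representations of $\Sigma$ follow from the elementary arguments sketched above.
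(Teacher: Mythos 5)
The paper gives no proof of this proposition: it is imported verbatim from \cite{NagasatoYamaguchi} (Proposition~3 and Theorem~1 there), so there is no in-text argument to compare yours against. Your reconstruction follows what is essentially the Lin/Nagasato--Yamaguchi route: reduce everything to the admissible tuples $z$ solving the abelianized Lin relations, identify those with characters of $\Z^{2g}/(V+\trans V)\Z^{2g}\cong H_1(\Sigma;\Z)$, and realize the involution by the explicit conjugating matrix $\mathrm{diag}\big(\sqrt{-1},-\sqrt{-1}\big)$ --- which is exactly the matrix $C$ of Remark~\ref{remark:conjugation_vector_space}. The steps you actually carry out are sound: reducibility of $\rho_z$ occurs only for the trivial tuple because $V+\trans V$ is invertible mod $2$ ($\Delta_K(-1)$ odd); $\rho_z\sim\rho_{z'}$ iff $z'=z^{\pm1}$, by computing the centralizer of $\rho_z(\mu)$ among matrices permuting the coordinate lines; and ``fixed $\Rightarrow$ metabelian'' via Schur's lemma applied to $\rho([\knotgroup,\knotgroup])$ together with the uniqueness of an invariant line for a reducible non-abelian subgroup.

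Two caveats. First, the step you yourself flag --- that abelianizing the relations $\mu a_i^+\mu^{-1}=a_i^-$ under $\rho_z$ yields exactly the system $\prod_j z_j^{(V+\trans V)_{ij}}=1$ --- is the real content of both the count and the branched-cover correspondence, and it is deferred rather than proved; it is true (it is the linking-number computation behind Lin's Lemma~2.1 and Proposition~\ref{prop:explicitIrredMetabelian}), but as written your argument rests on an unestablished identification. Second, the ``more intrinsic'' description of the correspondence is incorrect as literally stated: since $\rho(\mu)^2=-\I$, the restriction of $\rho$ to $\alpha^{-1}(2\Z)$ does \emph{not} factor through $\pi_1(\Sigma)$ (the lifted meridian maps to $-\I$, not $\I$); what descends is the diagonal character read off from the $x_i$'s, i.e.\ precisely your tuple $z$, so the combinatorial identification you gave first is the correct one and the aside should be repaired or dropped. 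Neither point breaks the overall structure, but the first is a genuine missing computation rather than a routine verification.
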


\section{Explicit forms of the composition $Ad \circ \rho$ for metabelian representations}
\label{section:MetabelianRepresentation} 
We start with an explicit form of the composition of an irreducible
metabelian representation with the adjoint action.
From a Lin presentation of $\knotgroup$,
the composition with the adjoint action can be decomposed into
the direct sum of a $1$-dimensional representation and a $2$-dimensional one.
\begin{proposition}
  \label{prop:explicitIrredMetabelianAd}
  Let $\rho$ be an irreducible metabelian representation of
  $\knotgroup$ into $\SL$, as expressed by using the
  correspondence~\eqref{eqn:ExplicitCorrespondenceMetabelian} in
  Proposition~\ref{prop:explicitIrredMetabelian}.  Then we have the
  following decomposition:
  $$
  Ad \circ \rho = \psi_1 \oplus \psi_2,\quad
  \psi_1 \in \GL(V_1), \quad \psi_2 \in \GL(V_2)
  $$ where $V_1$ and $V_2$ are the subspace $\langle H \rangle$ and $\langle E, F\rangle$ in $\sll$.
  Moreover the
  representations $\psi_1$ and $\psi_2$ are expressed, by taking
  conjugation if necessary, as follows:
  \begin{gather*}
    \psi_1(x_i) = 1, \quad \psi_1(\mu)=-1,\\
    \psi_2(x_i) =
    \begin{pmatrix}
      z_i^2 & 0 \\
      0 & z_i^{-2}
    \end{pmatrix},
    \quad
    \psi_2(\mu) =
    \begin{pmatrix}
      0 & -1 \\
      -1 & 0
    \end{pmatrix}
  \end{gather*}
  for generators of a Lin
  presentation in
  Proposition~\ref{prop:explicitIrredMetabelian}.
\end{proposition}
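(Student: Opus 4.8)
The plan is to work directly with the explicit matrix form of $\rho$ from Proposition~\ref{prop:explicitIrredMetabelian} and compute the adjoint action of each generator on the basis $E, H, F$ of $\sll$. First I would record the two computations that drive everything. For a diagonal element $D_i = \mathrm{diag}(z_i, z_i^{-1}) = \rho(x_i)$, the remark after the definition of $Ad$ already tells us the eigenvalues are $z_i^2, z_i^{-2}, 1$; concretely $Ad_{D_i}(E) = z_i^2 E$, $Ad_{D_i}(F) = z_i^{-2} F$, and $Ad_{D_i}(H) = H$, since $D_i E D_i^{-1}$, $D_i F D_i^{-1}$ are read off from multiplying out the $2\times 2$ matrices. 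Hence $Ad_{\rho(x_i)}$ preserves $V_1 = \langle H\rangle$ and $V_2 = \langle E, F\rangle$, acting as the identity on $V_1$ and as $\mathrm{diag}(z_i^2, z_i^{-2})$ on $V_2$ in the ordered basis $(E, F)$.

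Next I would do the same for $\mu \mapsto \left(\begin{smallmatrix} 0 & 1 \\ -1 & 0\end{smallmatrix}\right)$, call it $J$. A direct computation of $J E J^{-1}$, $J H J^{-1}$, $J F J^{-1}$ gives $Ad_J(H) = -H$, and $Ad_J(E) = -F$, $Ad_J(F) = -E$ (the signs coming from $J^{-1} = -J = \left(\begin{smallmatrix} 0 & -1 \\ 1 & 0\end{smallmatrix}\right)$). So again $V_1$ and $V_2$ are invariant: on $V_1$, $Ad_J = -1$; on $V_2$, in the basis $(E, F)$, $Ad_J = \left(\begin{smallmatrix} 0 & -1 \\ -1 & 0 \end{smallmatrix}\right)$. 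Since the generators $x_1, \dots, x_{2g}, \mu$ generate $\knotgroup$ and each acts block-diagonally with respect to the fixed decomposition $\sll = V_1 \oplus V_2$, the whole representation $Ad\circ\rho$ splits as $\psi_1 \oplus \psi_2$ with $\psi_1 \in \GL(V_1)$, $\psi_2 \in \GL(V_2)$, and the stated formulas hold on generators, hence on all of $\knotgroup$.

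The only subtlety — the reason for the phrase ``by taking conjugation if necessary'' — is that $\rho$ itself is only defined up to conjugacy, and the representative singled out in Proposition~\ref{prop:explicitIrredMetabelian} has $\mu$ sent to $J$ and $x_i$ to a fixed diagonal matrix; a different conjugate representative would send $H, E, F$ to a different basis of $\sll$, so the block decomposition $V_1 \oplus V_2$ is the one attached to this chosen representative. I would remark that conjugating $\rho$ by $g \in \SL$ conjugates $Ad\circ\rho$ by $Ad_g$, which is a linear automorphism of $\sll$, so the isomorphism type of $\psi_1 \oplus \psi_2$ is well defined and the explicit formulas hold for the representative of the given form. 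There is essentially no obstacle here: the argument is a finite, mechanical $2\times 2$ (equivalently $3 \times 3$) matrix computation, and the main point worth stating carefully is just that invariance of $V_1$ and $V_2$ under every generator is what licenses the direct-sum decomposition globally.
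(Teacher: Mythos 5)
Your proposal is correct and follows essentially the same route as the paper: the paper's proof is exactly the direct computation of $Ad\circ\rho(x_i)$ and $Ad\circ\rho(\mu)$ as $3\times 3$ matrices in the basis $\{E,H,F\}$, which block-diagonalize into the stated $\psi_1\oplus\psi_2$. Your explicit verification of $Ad_J(E)=-F$, $Ad_J(F)=-E$, $Ad_J(H)=-H$ and the eigenvalue computation for the diagonal elements matches the paper's calculation, and your remark about the role of the conjugation is a harmless elaboration.
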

\begin{proof}
  We can assume, if necessary by taking conjugation, that 
  an irreducible metabelian representation $\rho$ is expressed as 
  $$
  \rho(x_i)
  =\begin{pmatrix}
    z_i & 0 \\
    0 & z_i^{-1}
  \end{pmatrix},
  \quad
  \rho(\mu)
  =\begin{pmatrix}
    0 & 1 \\
    -1 & 0
  \end{pmatrix}
  $$
  for a Lin presentation
  $\knotgroup =
  \langle x_1, \ldots, x_{2g}, \mu \,|\, \mu a_i^+ \mu^{-1} = a_i^-, i=1, \ldots, 2g \rangle$.
  By direct calculation,
  the composition $Ad \circ \rho$ is expressed as 
  $$
  Ad \circ \rho(x_i)
  =\begin{pmatrix}
    z_i^2 & 0 & 0\\
    0 & 1 & 0 \\
    0 & 0 & z_i^{-2}
  \end{pmatrix},
  \quad
  Ad \circ \rho(\mu)
  =\begin{pmatrix}
    0 & 0 & -1 \\
    0 & -1 & 0 \\
    -1 & 0 & 0
  \end{pmatrix}
  $$
  with respect to the basis $\{E, H, F\}$ of $\sll$ as in~\eqref{eqn:basis_sll}.
\end{proof}

\begin{remark}
  \label{remark:conjugation_vector_space}
  An irreducible metabelian representation $\rho$ as in Proposition~\ref{prop:explicitIrredMetabelian}
  is conjugate to $\twistingOp{\rho}$ by the matrix
  $C=\left(\begin{smallmatrix}
    \sqrt{-1} & 0 \\
    0 & -\sqrt{-1}
  \end{smallmatrix}\right)$.
  The conjugation of $C$ acts on $V_2$ as $-\mathrm{id}$ and on $V_1$ as $\mathrm{id}$.
\end{remark}

The $2$-dimensional representation $\psi_2$ is 
related to 
another irreducible metabelian representation into $\SL$.
By taking conjugation of $\psi_2$ by the matrix 
$$D=
\begin{pmatrix}
  e^{3\pi\sqrt{-1}/4} & 0 \\
  0 & e^{-3\pi\sqrt{-1}/4}
\end{pmatrix},
$$
we can see that this conjugate representation gives the following correspondence:
\begin{equation}
\label{eqn:conjugate_psi_2}
\mu \mapsto 
\sqrt{-1}
\begin{pmatrix}
  0 & 1 \\
  -1 & 0
\end{pmatrix}, \quad
x_i \mapsto 
\begin{pmatrix}
  z_i^2 & 0 \\
  0 & z_i^{-2}
\end{pmatrix}
\end{equation}
for a Lin presentation 
$\knotgroup = 
\langle x_1, 
  \ldots, x_{2g}, \mu \,|\, 
  \mu a_i^+ \mu^{-1} = a_i^-, \, i=1, \ldots, 2g 
\rangle$.
From the correspondence~\eqref{eqn:conjugate_psi_2} and
Proposition~\ref{prop:explicitIrredMetabelian}, we can see that 
$D \psi_2 D^{-1}$ is expressed as $\big(\!\sqrt{-1}\,\big)^{[\,\cdot\,]} \rho'$
where $\rho'$ is another irreducible metabelian $\SL$-representation of $\knotgroup$
(see~Proposition~\ref{prop:fixed_points} for the notation
$\big(\!\sqrt{-1}\, \big)^{[\,\cdot\,]}$).  
Hence we have shown the following embedding of irreducible metabelian representation 
$\rho'$ into the composition of another irreducible metabelian representation 
and the adjoint action.
\begin{proposition}
  \label{prop:embedding_rep}
  Let $\rho$ be an irreducible metabelian representation of $\knotgroup$ into $\SL$.
  There exists an irreducible metabelian representation $\rho'$ such that
  $Ad \circ \rho$ is conjugate to the direct sum
  $(-1)^{[\,\cdot\,]} \oplus \big(\!\sqrt{-1}\, \big)^{[\,\cdot\,]} \rho'$
  as an $\SL[3]$-representation.
\end{proposition}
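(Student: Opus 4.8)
The plan is to produce $\rho'$ explicitly out of the summand $\psi_2$ of Proposition~\ref{prop:explicitIrredMetabelianAd} and to recognize $\psi_1$ as the scalar character $(-1)^{[\,\cdot\,]}$. First I would record the homology classes of the generators of a Lin presentation: each relation $\mu a_i^+\mu^{-1}=a_i^-$ abelianizes to $[a_i^+]=[a_i^-]$, and since the words $a_i^{\pm}$ lie in the subgroup generated by $x_1,\dots,x_{2g}$ this becomes a linear system on the $[x_j]$ whose coefficient matrix (the intersection form of the Seifert surface) is unimodular; hence $[x_j]=0$ for every $j$, while $[\mu]$ generates $H_1(\knotexterior;\Z)$. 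Comparing with the formulas of Proposition~\ref{prop:explicitIrredMetabelianAd} for $\psi_1$ on $V_1=\langle H\rangle$, this identifies $\psi_1$ with $(-1)^{[\,\cdot\,]}$.

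Next I would conjugate $\psi_2$ on $V_2=\langle E,F\rangle$ by $D=\left(\begin{smallmatrix} e^{3\pi\sqrt{-1}/4} & 0 \\ 0 & e^{-3\pi\sqrt{-1}/4}\end{smallmatrix}\right)$. Since $D$ commutes with the diagonal matrices $\psi_2(x_i)=\left(\begin{smallmatrix} z_i^2 & 0 \\ 0 & z_i^{-2}\end{smallmatrix}\right)$ and satisfies $D\left(\begin{smallmatrix} 0 & -1 \\ -1 & 0\end{smallmatrix}\right)D^{-1}=\sqrt{-1}\,\left(\begin{smallmatrix} 0 & 1 \\ -1 & 0\end{smallmatrix}\right)$, the conjugate $D\psi_2D^{-1}$ is exactly the correspondence~\eqref{eqn:conjugate_psi_2}. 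This map fails to be $\SL$-valued at $\mu$, so I would twist it by the scalar-valued homomorphism $\big(\!\sqrt{-1}\,\big)^{-[\,\cdot\,]}$; because $[x_i]=0$ and $[\mu]$ is odd, this twist is absorbed without changing the $x_i$ and normalizes $\mu$, so the twisted map $\rho'$ sends $x_i\mapsto\left(\begin{smallmatrix} z_i^2 & 0 \\ 0 & z_i^{-2}\end{smallmatrix}\right)$ and $\mu\mapsto\left(\begin{smallmatrix} 0 & 1 \\ -1 & 0\end{smallmatrix}\right)$, giving $D\psi_2D^{-1}=\big(\!\sqrt{-1}\,\big)^{[\,\cdot\,]}\rho'$.

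It then remains to check that $\rho'$ is again an irreducible metabelian $\SL$-representation. It is a genuine representation, being obtained from the representation $D\psi_2D^{-1}$ by twisting with a scalar-valued homomorphism; it is metabelian because any element of $[\knotgroup,\knotgroup]$ has trivial $\mu$-exponent and hence lands in the diagonal torus under $\rho'$; and all its images have determinant $1$ by the choice of twist. For irreducibility I would invoke the arithmetic behind Proposition~\ref{prop:fixed_points}: since $|\Delta_K(-1)|$ is odd, squaring is a bijection on the group of roots of unity whose order divides $|\Delta_K(-1)|$ (in particular each $z_i^2$ again has order dividing $|\Delta_K(-1)|$), so the tuple $(z_i^2)$ is nontrivial precisely when $(z_i)$ is; as $\rho$ is irreducible, some $z_i\ne1$, hence some $z_i^2\ne1$, and then $\rho'(\mu)$, which interchanges the two coordinate axes, shares no invariant line with the $\rho'(x_i)$. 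Thus $\rho'$ is an irreducible metabelian $\SL$-representation in the form of Proposition~\ref{prop:explicitIrredMetabelian}. Finally, conjugating $Ad\circ\rho=\psi_1\oplus\psi_2$ by the $\SL[3]$-matrix that is the identity on $V_1$ and $D$ on $V_2$ yields $Ad\circ\rho\cong(-1)^{[\,\cdot\,]}\oplus\big(\!\sqrt{-1}\,\big)^{[\,\cdot\,]}\rho'$, as claimed. I do not expect a genuine obstacle: the whole argument is a chain of explicit matrix computations on a Lin presentation, and the one point demanding attention is the last step's bookkeeping — keeping the ordering $\{H,E,F\}$ of the basis of $\sll$ straight when passing to block form, and verifying that the scalar twist returns to $\SL$, which is exactly where the facts that $[\mu]$ generates $H_1(\knotexterior;\Z)$ and that $[x_j]=0$ are used.
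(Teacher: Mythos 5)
Your proposal is correct and follows essentially the same route as the paper: the paper's argument is precisely the conjugation of $\psi_2$ by the matrix $D$ to reach the correspondence~\eqref{eqn:conjugate_psi_2}, which it then recognizes as $\big(\!\sqrt{-1}\,\big)^{[\,\cdot\,]}\rho'$ with $\rho'$ in the normal form of Proposition~\ref{prop:explicitIrredMetabelian}. You simply make explicit several points the paper leaves implicit (that $[x_j]=0$ and $[\mu]$ generates $H_1(\knotexterior;\Z)$, and that $\rho'$ is again irreducible because the $z_i$ have odd order), all of which check out.
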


We denote by $\assoc{\rho}$ this
$\SL$-representation $\rho'$ associated with $\rho$.

\section{Computation of the twisted Alexander polynomial}
This section shows several computation results for irreducible metabelian
representations.  The purpose of this section is to provide an
explicit form of the twisted Alexander polynomial for the composition
with the adjoint action.  Moreover we will see a relation of our
result to the twisted Alexander polynomial for the standard action of
$\SL$.

\subsection{Computation for irreducible metabelian representations %
  with and without the adjoint action}
We start with the twisted Alexander polynomial without the adjoint action,
which appears as a factor in $\twistedAlexAd{\knotexterior}{\rho}$.
\begin{proposition}
  \label{prop:TwistedAlexIrrdMetabelian}
  For any irreducible metabelian $\SL$-representation $\rho'$,
  the twisted Alexander polynomial
  $\twistedAlex{E_K}{\rho'}$ is a Laurent polynomial
  which consists of only even degree terms in $t$.
\end{proposition}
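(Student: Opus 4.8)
The plan is to exploit the symmetry $\rho' \sim (-\I)^{[\,\cdot\,]}\rho'$ coming from Proposition~\ref{prop:fixed_points}, together with the explicit form of $\rho'$ in Proposition~\ref{prop:explicitIrredMetabelian}, to show first that the twisted Alexander polynomial is invariant under $t \mapsto -t$, and then to promote "invariant under $t\mapsto -t$" to "only even-degree terms" by using that $\twistedAlex{E_K}{\rho'}$ is genuinely a Laurent polynomial rather than a rational function.

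First I would record that $\rho'$ is conjugate to $(-\I)^{[\,\cdot\,]}\rho'$; indeed by Proposition~\ref{prop:explicitIrredMetabelian} every irreducible metabelian representation has the stated diagonal-plus-antidiagonal form, and conjugating by $C=\left(\begin{smallmatrix}\sqrt{-1} & 0 \\ 0 & -\sqrt{-1}\end{smallmatrix}\right)$ fixes each $x_i$ and sends $\mu \mapsto -\mu$, which is exactly multiplication by $(-\I)^{[\,\cdot\,]}$ since $[\mu]=1$ and $[x_i]=0$ (compare Remark~\ref{remark:conjugation_vector_space}). By the conjugation-invariance of the twisted Alexander polynomial noted after Definition~\ref{def:twistedAlexander}, we get $\twistedAlex{E_K}{\rho'} = \Delta^{\alpha\otimes(-\I)^{[\,\cdot\,]}\rho'}_{E_K}(t)$. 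Now I would unwind the definition: replacing $\rho'(\gamma)$ by $(-1)^{[\gamma]}\rho'(\gamma)$ amounts to replacing $\alpha(\gamma)=t^{[\gamma]}$ by $(-t)^{[\gamma]}$ inside $\Phi_{\rho'}$, so $\Phi_{(-\I)^{[\,\cdot\,]}\rho'}$ is just $\Phi_{\rho'}$ evaluated at $-t$. Both the numerator determinant and the denominator $\det(\Phi_{\rho'}(g_l-1))$ therefore transform by $t\mapsto -t$, and the quotient gives $\twistedAlex{E_K}{\rho'}(t) = \twistedAlex{E_K}{\rho'}(-t)$ up to the usual $\pm t^m$ ambiguity, which I would pin down using that $\rho'$ has determinant $1$ and the standard normalization.

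Next, the passage from the functional symmetry to the statement about degrees: write $\twistedAlex{E_K}{\rho'}(t) = \sum_{j} c_j t^j$ as a Laurent polynomial. The relation $\sum_j c_j t^j = \pm t^m \sum_j c_j(-t)^j = \pm t^m\sum_j (-1)^j c_j t^j$ forces, after matching the top and bottom degrees of both sides, that $m$ is even and then that $c_j = 0$ whenever $j$ is odd (the sign $\pm$ being $+$, which I would verify by comparing leading coefficients, using that they are nonzero). Hence only even-degree terms survive. The one genuinely non-formal ingredient is that $\twistedAlex{E_K}{\rho'}$ is honestly a Laurent polynomial and not merely a rational function --- for this I would invoke the known result (Kitano~\cite{Kitano}, Wada~\cite{Wada94}) that for a knot and a nontrivial-determinant representation the twisted Alexander polynomial lies in $\C[t^{\pm1}]$, or alternatively deduce polynomiality directly from the Lin presentation since $\rho'$ restricted to the commutator subgroup is abelian.

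The main obstacle I anticipate is bookkeeping the $\pm t^m$ indeterminacy cleanly: one must be careful that the symmetry $t\mapsto -t$ survives the normalization, and in particular that the exponent shift $m$ is even, since an odd $m$ would spoil the conclusion. I would handle this by fixing a Lin presentation, where $\alpha(\mu)=t$ and the relators $\mu a_i^+\mu^{-1}(a_i^-)^{-1}$ have total $\mu$-exponent zero, so that the degree of the numerator determinant in $t$ is explicitly controlled; this makes the exponent $m$ computable and, in particular, even. The rest is the routine coefficient comparison sketched above.
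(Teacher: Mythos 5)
Your route is genuinely different from the paper's. The paper obtains the evenness by passing through Proposition~\ref{prop:fixed_points}: the conjugacy class of $\rho'$ corresponds to an abelian representation of the double branched cover, and the invariant is then identified with the one in Theorem~7.1 of \cite{HeraldKirkLivingston2010} after the substitution $t\mapsto -t^2$, so it is manifestly a polynomial in $t^2$; polynomiality itself comes from Wada's criterion, exactly as in your last paragraph. You instead use the internal symmetry $\rho'\sim(-\I)^{[\,\cdot\,]}\rho'$, which is the same device the paper deploys in the proof of Theorem~\ref{thm:twistedAlexanderAdMeta} to get $Q(t)=Q(-t)$ for the summand $\psi_2$ (cf.\ Remark~\ref{remark:conjugation_vector_space}). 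Applying it directly to $\rho'$ is more self-contained --- it avoids the double-branched-cover and Herald--Kirk--Livingston input entirely --- and your computations that conjugation by $C$ realizes $(-\I)^{[\,\cdot\,]}\rho'$ and that $\Phi_{(-\I)^{[\,\cdot\,]}\rho'}$ equals $\Phi_{\rho'}$ with $t$ replaced by $-t$ are both correct.

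The one step that does not work as written is the determination of the sign. From $\Delta(t)=\epsilon\, t^{m}\Delta(-t)$ you correctly force $m=0$ by comparing extreme degrees, but comparing leading coefficients then only yields $\epsilon=(-1)^{b}$ with $b$ the top degree: this ties the sign to the very parity you are trying to establish, so it cannot distinguish ``all terms of even degree'' from ``all terms of odd degree''. As written your argument proves only that all nonzero coefficients sit in degrees of one common parity. The clean repair is the one you half-gesture at in your final paragraph: fix the Lin presentation and the deleted generator $\mu$ once and for all. Then the numerator $N(t)=\det\bigl(\Phi_{\rho'}(\partial r_i/\partial x_j)\bigr)$ and the denominator $\det\Phi_{\rho'}(\mu-1)=t^2+1$ are unambiguous; conjugating every $2\times 2$ block of the Fox matrix by $C$ amounts to conjugating the whole matrix by $\mathrm{diag}(C,\dots,C)$ and changes no determinant, so $N(t)=N(-t)$ and hence $\twistedAlex{E_K}{\rho'}=\Delta_{E_K}^{\alpha\otimes\rho'}(-t)$ hold exactly, with no unit and no sign. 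Evenness of the Laurent polynomial then follows immediately, and with that substitution your proof is complete.
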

\begin{proof}
  We choose a Lin presentation of $\knotgroup$:
  $$\langle x_1, \ldots, x_{2g}, \mu \,|\, \mu a_i^+ \mu^{-1} = a_i^-, i=1, \ldots, 2g \rangle$$
  and suppose that 
  the representation $\rho'$ sends the generators to the following matrices:
  $$
  \rho'(x_i)
  =\begin{pmatrix}
    z_i & 0 \\
    0 & z_i^{-1}
  \end{pmatrix},
  \quad
  \rho'(\mu)
  =\begin{pmatrix}
    0 & 1 \\
    -1 & 0
  \end{pmatrix}.
  $$
  Since some generator $x_i$ is contained in the commutator subgroup and satisfies that $\rho'(x_i) \not = \I$,
  the twisted Alexander polynomial $\twistedAlex{\knotexterior}{\rho'}$
  turns into a Laurent polynomial by Wada's
  criterion~\cite[Proposition~8]{Wada94}. From Proposition~\ref{prop:fixed_points}, 
  the conjugacy class of $\rho'$ corresponds to that of an abelian representation for
  the double branched cover.
  Our invariant coincides with the twisted Alexander polynomial 
  replaced the variable $t$ with $-t^2$ in~\cite[Theorem~7.1]{HeraldKirkLivingston2010}
  (we refer to the formula~$(7.4)$ in~\cite{HeraldKirkLivingston2010}
  for changing variables).
\end{proof}

Next we consider the twisted Alexander polynomial for
the composition of irreducible metabelian
$\SL$-representations with the adjoint action. We assume a technical
condition called \lq\lq longitude--regular\rq\rq\, for
$\SL$-representations.  This condition guarantees the twisted
Alexander polynomial for the composition of $\SL$-representation with
the adjoint action to be a Laurent polynomial.
\begin{theorem}
  \label{thm:twistedAlexanderAdMeta}
  Let an $\SL$-representation $\rho$ be longitude--regular and metabelian.
  Then the twisted Alexander polynomial for the composition of $\rho$ with the adjoint action is expressed as 
  $$
  \twistedAlexAd{E_K}{\rho}
  =
  (t-1)\Delta_K(-t)P(t)
  $$
  where $P(t)$ is a Laurent polynomial satisfying that $P(t)=P(-t)$.
\end{theorem}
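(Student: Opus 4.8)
The plan is to deduce the factorization from the direct--sum decomposition of $Ad\circ\rho$ in Proposition~\ref{prop:embedding_rep} together with multiplicativity of the twisted Alexander polynomial under direct sums, and then to extract the extra factor $(t-1)$ and the symmetry $P(t)=P(-t)$ from Proposition~\ref{prop:TwistedAlexIrrdMetabelian} and the longitude--regular hypothesis. First I would fix a Lin presentation and use Proposition~\ref{prop:embedding_rep} to write $Ad\circ\rho=\psi_1\oplus\psi_2$ up to conjugation, where $\psi_1=\twistingOp$ acts on $\langle H\rangle$ and $\psi_2$, acting on $\langle E,F\rangle$, is conjugate to $\big(\!\sqrt{-1}\,\big)^{[\,\cdot\,]}\assoc{\rho}$. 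For every element of $\Z[\knotgroup]$ the matrix $\Phi_{Ad\circ\rho}$ is then block diagonal, with a $1\times1$ block and a $2\times2$ block with respect to $\sll=\langle H\rangle\oplus\langle E,F\rangle$, so both the numerator determinant and the denominator $\det\Phi_{Ad\circ\rho}(g_l-1)$ in Definition~\ref{def:twistedAlexander} split as products. Using this together with the conjugation invariance of the invariant, and noting that the $\Z$-linear extension of $\alpha\otimes\psi_1$ sends $\gamma\mapsto(-t)^{[\gamma]}$ while that of $\alpha\otimes\big(\!\sqrt{-1}\,\big)^{[\,\cdot\,]}\assoc{\rho}$ sends $\gamma\mapsto\big( \big(\! \sqrt{-1}\,\big) t \big)^{[\gamma]}\assoc{\rho}(\gamma)$, I obtain
$$
\twistedAlexAd{\knotexterior}{\rho}\ \doteq\ \frac{\Delta_K(-t)}{-t-1}\cdot\Delta^{\alpha\otimes\,\assoc{\rho}}_{\knotexterior}\big( \big(\! \sqrt{-1}\,\big) t \big),
$$
where $\doteq$ denotes equality up to a factor $\pm t^{m}$; here the first factor is Wada's invariant of the trivial representation with $t$ replaced by $-t$, hence equals $\Delta_K(-t)/(-t-1)$, and the second is $\twistedAlex{\knotexterior}{\assoc{\rho}}$ after the substitution $t\mapsto\big(\!\sqrt{-1}\,\big)t$.

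Next I would apply Proposition~\ref{prop:TwistedAlexIrrdMetabelian}: $\twistedAlex{\knotexterior}{\assoc{\rho}}$ is a Laurent polynomial all of whose terms have even degree, hence of the form $Q(t^{2})$ with $Q\in\C[u^{\pm1}]$, and the substitution $t\mapsto\big(\!\sqrt{-1}\,\big)t$ turns it into $Q(-t^{2})$, still a Laurent polynomial in $t^{2}$ and in particular invariant under $t\mapsto-t$. Since $\rho$ is longitude--regular, $\twistedAlexAd{\knotexterior}{\rho}$ is an honest Laurent polynomial; because $\Delta_K(1)=\pm1$ and $\Delta_K(-1)\neq0$ (it is the knot determinant up to sign) the polynomial $\Delta_K(-t)$ is coprime to $t^{2}-1$ in the principal ideal domain $\C[t^{\pm1}]$, so the displayed identity forces $(t+1)\mid Q(-t^{2})$. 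Invariance of $Q(-t^{2})$ under $t\mapsto-t$ then gives $(t-1)\mid Q(-t^{2})$ as well, hence $(t^{2}-1)\mid Q(-t^{2})$ with quotient a Laurent polynomial $R(t^{2})$ in $t^{2}$. Setting $P(t):=-R(t^{2})$, whose sign is immaterial in view of the $\pm t^{m}$ indeterminacy, a short computation gives $\twistedAlexAd{\knotexterior}{\rho}\doteq(t-1)\,\Delta_K(-t)\,P(t)$ with $P(t)=P(-t)$, which is the claim; concretely this $P(t)$ is, up to sign, $\Delta^{\alpha\otimes\,\assoc{\rho}}_{\knotexterior}\big( \big(\! \sqrt{-1}\,\big) t \big)/(t^{2}-1)$.

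The step I expect to be the genuine obstacle is the single use of longitude--regularity, namely the assertion that $\twistedAlexAd{\knotexterior}{\rho}$ is a Laurent polynomial --- equivalently, that $\Delta^{\alpha\otimes\,\assoc{\rho}}_{\knotexterior}$ vanishes at $\sqrt{-1}$. Everything else is bookkeeping with the substitutions $t\mapsto-t$ and $t\mapsto\big(\!\sqrt{-1}\,\big)t$ and with the $\pm t^{m}$ ambiguity; the real content is to unwind the definition of longitude--regular in terms of the twisted homology of the boundary torus and to show that it removes the potential pole of Wada's invariant coming from the summand $\psi_{2}$. Once that polynomiality is established, the factorization and the symmetry of $P(t)$ follow formally.
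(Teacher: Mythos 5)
Your overall route is the same as the paper's: decompose $Ad\circ\rho$ into $\psi_1\oplus\psi_2$ via a Lin presentation, use multiplicativity of Wada's invariant over the direct sum, identify $\twistedAlex{\knotexterior}{\psi_1}$ with $\Delta_K(-t)/(-t-1)$, establish the symmetry of the second factor under $t\mapsto -t$, and then cancel the pole at $t=-1$. (The paper gets the symmetry directly from the fact that $\psi_2$ is conjugate to $(-1)^{[\,\cdot\,]}\psi_2$ by the matrix $C$ of Remark~\ref{remark:conjugation_vector_space}, and defers the identification of the second factor with $\Delta^{\alpha\otimes\,\assoc{\rho}}_{\knotexterior}\big(\big(\!\sqrt{-1}\,\big)t\big)$ and the even-degree statement to Theorem~\ref{thm:relation_twistedAlexPolys}; your use of Proposition~\ref{prop:TwistedAlexIrrdMetabelian} reaches the same symmetry and is consistent with that later theorem.)

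The one genuine gap is exactly the step you flag, and it is more serious than a missing lemma: you invoke longitude-regularity in the form ``$\twistedAlexAd{\knotexterior}{\rho}$ is an honest Laurent polynomial'' and then deduce $(t+1)\mid Q$ by coprimality. But Laurent polynomiality is not what Definition~\ref{def:longitude_regular} hands you; in this paper it is one of the advertised \emph{conclusions} of the theorem (see the introduction), so taking it as an input is circular. What the paper actually uses is an external result from~\cite{YY1}: longitude-regularity forces $\twistedAlexAd{\knotexterior}{\rho}$ to vanish at $t=1$. Since $\Delta_K(-1)$ is a nonzero odd integer, this gives $Q(1)=0$; the symmetry $Q(t)=Q(-t)$ then gives $Q(-1)=0$, hence $Q(t)=(t-1)(t+1)P(t)$, and the Laurent polynomiality you wanted to assume comes out at the end of the argument rather than going in at the beginning. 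To close your proof you must therefore replace ``longitude-regular $\Rightarrow$ Laurent polynomial'' by ``longitude-regular $\Rightarrow$ zero at $t=1$,'' i.e.\ the homological statement relating the surjectivity of $H_1(\lambda;\sll_\rho)\to H_1(\knotexterior;\sll_\rho)$ to the vanishing of the twisted Alexander polynomial at $t=1$, which is precisely the content imported from~\cite{YY1}. Once that substitution is made, the rest of your bookkeeping with $t\mapsto -t$, $t\mapsto\big(\!\sqrt{-1}\,\big)t$ and the $\pm t^m$ ambiguity goes through.
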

\begin{remark}
  The assumption that $\rho$ is longitude--regular includes the
  irreducibility of $\rho$ (see Definition~\ref{def:longitude_regular}).
\end{remark}
\begin{proof}
  We choose a Lin presentation of $\knotgroup$ as
  $$\knotgroup = \langle x_1, \ldots, x_{2g}, \mu \,|\, \mu a_i^+ \mu^{-1} = a_i^-, i=1, \ldots, 2g \rangle.$$
  By Proposition~\ref{prop:explicitIrredMetabelianAd}, 
  we can assume that 
  $Ad \circ \rho = \psi_2 \oplus \psi_1$ such that 
  \begin{gather*}
    \psi_1(x_i) = 1, \quad 
    \psi_1(\mu) = -1,\\
    \psi_2(x_i) =
    \begin{pmatrix}
      z_i^2 & 0 \\
      0 & z_i^{-2}
    \end{pmatrix},\quad
    \psi_2(\mu) =
    \begin{pmatrix}
      0 & -1 \\
      -1 & 0
    \end{pmatrix}.
  \end{gather*}
  The twisted Alexander polynomial
  $\twistedAlexAd{\knotexterior}{\rho}$ for the composition $Ad \circ
  \rho$ is factored into the product of
  $\twistedAlex{\knotexterior}{\psi_1}$ and
  $\twistedAlex{\knotexterior}{\psi_2}$.  From \cite{KL}, it is known
  that the twisted Alexander polynomial
  $\twistedAlex{\knotexterior}{\psi_1}$ is expressed as the rational
  function $\Delta_K(-t)/(-t-1).$ We can see that the twisted
  Alexander polynomial $\twistedAlex{\knotexterior}{\psi_2}$ turns into
  a Laurent polynomial by Wada's
  criterion~\cite[Proposition~8]{Wada94} since there exists a
  commutator $x_i$ such that $\psi_2(x_i) \not = \I$.  Moreover
  $\psi_2$ is conjugate to $\twistingOp{\psi_2}$ by the matrix $C$
  in Remark~\ref{remark:conjugation_vector_space},
  which implies that
  the Laurent polynomial $\twistedAlex{\knotexterior}{\psi_2}$ has the symmetry
  that
  $\twistedAlex{\knotexterior}{\psi_2} =
  \Delta_{\knotexterior}^{\alpha \otimes \psi_2}(-t)$.
  Summarizing the above, we have
  \begin{align}
    \twistedAlexAd{\knotexterior}{\rho}
    &= \twistedAlex{\knotexterior}{\psi_1} \cdot \twistedAlex{\knotexterior}{\psi_2} \nonumber\\
    &= \frac{\Delta_K(-t)}{-t-1} \cdot Q(t) \label{eqn:factorizationTwistedAlex}
  \end{align}
  where $Q(t)$ is a Laurent polynomial satisfying that $Q(t)=Q(-t)$.

  Since $\rho$ is longitude--regular, it follows from~\cite{YY1} that
  the twisted Alexander polynomial
  $\twistedAlexAd{\knotexterior}{\rho}$ has a zero at $t=1$.  It is
  known that $\Delta_K(-1)$ is an odd integer. Hence the Laurent
  polynomial $Q(t)$ has a zero at $t=1$.  Together with the symmetry
  that $Q(t)=Q(-t)$, this implies that we can factor $Q(t)$ into the product
  $(t-1)(t+1)P(t)$.  This factorization of $Q(t)$ completes the proof
  when substituted in~\eqref{eqn:factorizationTwistedAlex}.
\end{proof}

The factorization of Theorem~\ref{thm:twistedAlexanderAdMeta} is deduced
from the decomposition $Ad \circ \rho = \psi_1 \oplus \psi_2$
for an irreducible metabelian representation $\rho$
in~Proposition~\ref{prop:explicitIrredMetabelianAd}.
Furthermore Proposition~\ref{prop:embedding_rep} shows
that the polynomial $P(t)$ is given by the twisted Alexander polynomial for $\assoc{\rho}$.
\begin{theorem}
  \label{thm:relation_twistedAlexPolys}
  For an irreducible metabelian $\SL$-representation $\rho$,
  let $\psi_2$ be the 2-dimensional direct summand in the composition $Ad \circ \rho$.
  Then we have the following equation of the twisted Alexander polynomials:
  $$
  \twistedAlex{\knotexterior}{\psi_2} =
  \Delta_{\knotexterior}^{\alpha \otimes \, \assoc{\rho}}\big( \big(\! \sqrt{-1}\,\big) t \big)
  $$
  And the factor $P(t)$ in Theorem~\ref{thm:twistedAlexanderAdMeta} is given by 
  $\Delta_{\knotexterior}^{\alpha \otimes \, \assoc{\rho}}\big( (\! \sqrt{-1}\,) t \big) / (t^2-1)$,
  \ie
  $$
  \twistedAlexAd{\knotexterior}{\rho}
  = (t-1) \cdot \Delta_K(-t) \cdot 
  \frac{
    \Delta_{\knotexterior}^{\alpha \otimes \, \assoc{\rho}}\big( (\! \sqrt{-1}\,) t \big)
  }{
    t^2-1
  },
  $$
  where $\Delta^{\alpha \otimes \, \assoc{\rho}}_{\knotexterior} \big( \big(\! \sqrt{-1}\,\big) t \big) / (t^2 -1)$
  is a Laurent polynomial which consists of only even degree terms.
\end{theorem}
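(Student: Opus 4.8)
The plan is to identify the $2$-dimensional direct summand $\psi_2$ of $Ad\circ\rho$ with the representation $(\!\sqrt{-1}\,)^{[\,\cdot\,]}\assoc{\rho}$ up to conjugation, and then track how this identification affects the twisted Alexander polynomial. By Proposition~\ref{prop:embedding_rep} and the discussion preceding it, conjugating $\psi_2$ by the diagonal matrix $D$ produces exactly the correspondence~\eqref{eqn:conjugate_psi_2}, which by Proposition~\ref{prop:explicitIrredMetabelian} is the metabelian representation $\assoc{\rho}$ scaled by $(\!\sqrt{-1}\,)^{[\,\cdot\,]}$ on each generator. Since the twisted Alexander polynomial is invariant under conjugation of the representation (as noted after Definition~\ref{def:twistedAlexander}), we have $\twistedAlex{\knotexterior}{\psi_2} = \twistedAlex{\knotexterior}{(\!\sqrt{-1}\,)^{[\,\cdot\,]}\assoc{\rho}}$. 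It therefore remains to show that tensoring $\assoc{\rho}$ by the scalar cocycle $(\!\sqrt{-1}\,)^{[\,\cdot\,]}$ has the same effect on $\twistedAlex{\knotexterior}{\assoc{\rho}}$ as the substitution $t \mapsto (\!\sqrt{-1}\,)t$.

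The key observation for this last point is that $(\!\sqrt{-1}\,)^{[\,\cdot\,]}\assoc{\rho}$ and $\assoc{\rho}$ differ only in how $\alpha$ interacts with the representation: replacing $\rho'(\gamma)$ by $(\!\sqrt{-1}\,)^{[\gamma]}\rho'(\gamma)$ is the same as replacing $\alpha(\gamma)\otimes\rho'(\gamma) = t^{[\gamma]}\otimes\rho'(\gamma)$ by $((\!\sqrt{-1}\,)t)^{[\gamma]}\otimes\rho'(\gamma)$. Thus $\Phi_{(\!\sqrt{-1}\,)^{[\,\cdot\,]}\assoc{\rho}}$ is obtained from $\Phi_{\assoc{\rho}}$ by the substitution $t\mapsto (\!\sqrt{-1}\,)t$ throughout. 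Since both the numerator and denominator determinants in~\eqref{eqn:DefTwistedAlexander} are built from $\Phi$ applied to Fox derivatives, this substitution passes through to the quotient, giving $\twistedAlex{\knotexterior}{(\!\sqrt{-1}\,)^{[\,\cdot\,]}\assoc{\rho}} = \Delta_{\knotexterior}^{\alpha\otimes\,\assoc{\rho}}\big((\!\sqrt{-1}\,)t\big)$. Combining the two displayed equalities yields the first claim $\twistedAlex{\knotexterior}{\psi_2} = \Delta_{\knotexterior}^{\alpha\otimes\,\assoc{\rho}}\big((\!\sqrt{-1}\,)t\big)$.

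For the statement about $P(t)$: Theorem~\ref{thm:twistedAlexanderAdMeta} gives $\twistedAlexAd{\knotexterior}{\rho} = (t-1)\Delta_K(-t)P(t)$, and its proof shows concretely via~\eqref{eqn:factorizationTwistedAlex} that $\twistedAlex{\knotexterior}{\psi_2} = Q(t) = (t-1)(t+1)P(t) = (t^2-1)P(t)$. Substituting the identity just established gives $P(t) = \Delta_{\knotexterior}^{\alpha\otimes\,\assoc{\rho}}\big((\!\sqrt{-1}\,)t\big)/(t^2-1)$, and plugging this into the factorization from Theorem~\ref{thm:twistedAlexanderAdMeta} produces the displayed formula for $\twistedAlexAd{\knotexterior}{\rho}$. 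Finally, to see that $P(t)$ consists only of even degree terms: by Proposition~\ref{prop:TwistedAlexIrrdMetabelian}, $\Delta_{\knotexterior}^{\alpha\otimes\,\assoc{\rho}}(t)$ has only even degree terms, hence so does $\Delta_{\knotexterior}^{\alpha\otimes\,\assoc{\rho}}\big((\!\sqrt{-1}\,)t\big)$ (each even power $t^{2k}$ gets multiplied by the constant $(\!\sqrt{-1}\,)^{2k}=(-1)^k$, so parity of degrees is preserved); dividing by $t^2-1$, which is also even, keeps all degrees even.

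The main obstacle I anticipate is being careful about the scalar bookkeeping in the passage from $(\!\sqrt{-1}\,)^{[\,\cdot\,]}\assoc{\rho}$ to the variable substitution — in particular making sure the $\pm t^m$ ambiguity in Definition~\ref{def:twistedAlexander} for the adjoint twisted Alexander polynomial is compatible with the clean equality claimed, and that the indeterminacy does not interfere with extracting $P(t)$ as a genuine Laurent polynomial with the even-degree property. This is a matter of checking that the substitution $t\mapsto (\!\sqrt{-1}\,)t$ commutes with the normalization conventions, which I expect to be routine but requires stating precisely which representative of the equivalence class one works with.
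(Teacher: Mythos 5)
Your proposal is correct and follows essentially the same route as the paper's own (much terser) proof: identify $\psi_2$ with $\big(\!\sqrt{-1}\,\big)^{[\,\cdot\,]}\assoc{\rho}$ up to conjugation via Proposition~\ref{prop:embedding_rep}, absorb the scalar character into the variable to get the substitution $t \mapsto \big(\!\sqrt{-1}\,\big)t$, read off $P(t)$ from the factorization in the proof of Theorem~\ref{thm:twistedAlexanderAdMeta}, and invoke Proposition~\ref{prop:TwistedAlexIrrdMetabelian} for the even-degree claim. The extra care you take with the variable substitution and the parity bookkeeping is a welcome elaboration of steps the paper leaves implicit.
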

\begin{proof}
  It follows from 
  $
  \twistedAlex{\knotexterior}{\psi_2}
  = \twistedAlex{\knotexterior}{\big(\!\sqrt{-1}\, \big)^{[\,\cdot\,]} \assoc{\rho}}
  = \Delta_{\knotexterior}^{\big(\!\sqrt{-1}\, \big)^{[\,\cdot\,]} \alpha \otimes \, \assoc{\rho}}( t )
  = \Delta_{\knotexterior}^{\alpha \otimes \, \assoc{\rho}}\big( \big(\!\sqrt{-1}\,\big) t \big)
  $
  and $P(t)$ is $\twistedAlex{\knotexterior}{\psi_2} / (t^2 -1)$.
  Proposition~\ref{prop:TwistedAlexIrrdMetabelian} implies that every term in $\twistedAlex{\knotexterior}{\psi_2}$
  has even degree.
\end{proof}

\subsection{On the twisted homology group and the longitude--regularity}
In this Subsection, we will go into detail on the twisted homology group and the
longitude--regularity of irreducible metabelian $\SL$-representations.  
The longitude--regularity of an $\SL$-representation $\rho$ of
$\knotgroup$ consists of the following conditions on the twisted chain complex 
$C_*(\knotexterior; \sll_\rho)$ given by $Ad \circ \rho$. 

The twisted chain complex $C_*(\knotexterior; \sll_\rho)$ is defined as follows.
We denote by $\univcover$ the universal cover of $\knotexterior$.  The
chain complex $C_*(\univcover;\Z)$ consists of left
$\Z[\knotgroup]$-modules via covering transformation by $\knotgroup$.
Under the action of $\knotgroup$ by $Ad \circ \rho^{-1}$, the Lie
algebra $\sll$ is a right $\Z[\knotgroup]$-module.
\begin{definition}
  By taking the tensor product of $C_*(\univcover; \Z)$ with $\sll$, 
  we have the chain complex $\sll \otimes_{Ad \circ \rho^{-1}} C_*(\univcover; \Z)$.
  This local system is called the twisted chain complex of $\knotexterior$ with the coefficient $\sll_\rho$
  and denoted by $C_*(\knotexterior; \sll_\rho)$.
  We denote by $H_*(\knotexterior;\sll_\rho)$ the homology group of $C_*(\knotexterior; \sll_\rho)$.
\end{definition}

A special case
in the result~\cite{BodenFriedlMetabelianII} of H.~Boden and S.~Friedl 
gives the dimension of $H_i(\knotexterior; \sll_{\rho})$ as follows.

\begin{proposition}[\cite{BodenFriedlMetabelianII}]
  \label{prop:DimLocalSystem}
  For any irreducible metabelian $\SL$-representation $\rho$, we have 
  $\dim_{\C} H_1(\knotexterior;\sll_{\rho}) = 1$.
  Moreover from the irreducibility of $\rho$ 
  and Poincar\'e duality, 
  we can see the dimension of $H_i(\knotexterior; \sll_\rho)$ for all $i$ as follows:
  $$
  \dim_{\C} H_i(\knotexterior; \sll_\rho)=
  \begin{cases}
    1 & i= 1\, \hbox{or}\, 2,\\
    0 & \hbox{otherwise}.
  \end{cases}
  $$
\end{proposition}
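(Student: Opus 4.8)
The plan is to treat the equality $\dim_{\C} H_1(\knotexterior; \sll_\rho) = 1$ as the input supplied by Boden and Friedl's computation in~\cite{BodenFriedlMetabelianII}, and to obtain the dimensions of all the other homology groups from it by three standard observations about the twisted chain complex $C_*(\knotexterior; \sll_\rho)$: a vanishing range, the computation of $H_0$, and an Euler characteristic count (reinforced by Poincar\'e--Lefschetz duality).

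First I would note that $H_i(\knotexterior; \sll_\rho) = 0$ for every $i \geq 3$ (and of course for $i < 0$): the knot exterior $\knotexterior$ is a compact $3$-manifold with non-empty torus boundary, hence homotopy equivalent to a finite $2$-dimensional CW complex, so $C_*(\knotexterior; \sll_\rho)$ may be taken to be concentrated in degrees $0,1,2$. Next I would compute $H_0$. The adjoint representation is self-dual, since the Killing form on $\sll$ is invariant under $Ad \circ \rho$, so $\dim_{\C} H_0(\knotexterior; \sll_\rho) = \dim_{\C} H^0(\knotexterior; \sll_\rho) = \dim_{\C} (\sll)^{\knotgroup}$, the space of trace--free matrices commuting with every element of $\rho(\knotgroup)$. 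By Schur's lemma and the irreducibility of $\rho$ this space is contained in $\C \I$, and being trace--free it is $0$; hence $H_0(\knotexterior; \sll_\rho) = 0$.

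Finally I would invoke the Euler characteristic identity for the twisted complex. Since $\chi(\knotexterior) = 0$, the alternating sum $\sum_i (-1)^i \dim_{\C} H_i(\knotexterior; \sll_\rho)$ equals $\chi(\knotexterior) \cdot \dim_{\C} \sll = 0$; combining this with $H_0 = 0$ and $H_i = 0$ for $i \geq 3$ gives $\dim_{\C} H_2(\knotexterior; \sll_\rho) = \dim_{\C} H_1(\knotexterior; \sll_\rho) = 1$. The same conclusion follows from Poincar\'e--Lefschetz duality $H_i(\knotexterior; \sll_\rho) \cong H^{3-i}(\knotexterior, \boundaryTorus; \sll_\rho)$ together with the self-duality of $\sll_\rho$ and the half--lives--half--dies principle on the torus $\boundaryTorus$; this is the role played by duality in the statement, and it also pins down $\dim_{\C} H_*(\boundaryTorus; \sll_\rho)$ should one wish to avoid the $2$-complex argument.

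The only genuinely non-formal step is the equality $\dim_{\C} H_1(\knotexterior; \sll_\rho) = 1$ itself, and I would import it verbatim from~\cite{BodenFriedlMetabelianII}: there one uses that an irreducible metabelian $\rho$ is induced from a non-trivial character of $H_1$ of the double branched cover, and computes $H_1(\knotexterior; \sll_\rho)$ by a Mayer--Vietoris (surgery) argument relating it to the homology of the double cyclic cover of $\knotexterior$ with coefficients in that character and its conjugate, the value $1$ being exactly the measure of the non-triviality of the character. Within the present argument this is a black box, so the points requiring care are only the verification that $H_0$ vanishes --- which is where irreducibility enters --- and the bookkeeping that the $2$-complex bound plus the Euler characteristic count leave $H_1$ and $H_2$ as the sole possibly non-zero groups, each of dimension $1$.
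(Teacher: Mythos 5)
Your proposal is correct and fills in exactly the argument the paper only gestures at: the paper states this proposition with no proof beyond citing Boden--Friedl for $\dim_{\C} H_1(\knotexterior;\sll_\rho)=1$ and remarking that the rest follows ``from the irreducibility of $\rho$ and Poincar\'e duality.'' Your treatment of the citation as a black box, the Schur's-lemma computation $H_0=0$ via self-duality of the adjoint representation, and the Euler-characteristic (or duality) count for $H_2$ is precisely the intended route.
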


We study the homology group $H_*(\knotexterior; \sll_\rho)$ under the involution 
in Proposition~\ref{prop:fixed_points}.  
By Proposition~\ref{prop:explicitIrredMetabelianAd}, The chain complex
$C_*(\knotexterior; \sll_\rho)$ is decomposed into the direct sum of
$V_1 \otimes_{\psi_1^{-1}} C_*(\univcover; \Z)$ and $V_2
\otimes_{\psi_2^{-1}} C_*(\univcover; \Z)$ when 
an irreducible $\SL$-representation
$\rho$ is metabelian.  We denote by
$C_*(\knotexterior; V_i)$ the subchain complex $V_i
\otimes_{\psi_i^{-1}} C_*(\univcover; \Z)$ for $i=1, 2$ and use the notation
$H_*(\knotexterior; V_i)$ for the homology group.
\begin{proposition}
  \label{prop:ReductionLocalSystem}
  If $\SL$-representation $\rho$ is irreducible metabelian, then the
  homology group $H_*(\knotexterior; \sll_\rho)$ is isomorphic to the
  homology group $H_*(\knotexterior; V_2)$.
\end{proposition}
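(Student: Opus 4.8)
The plan is to reduce the statement to the vanishing of $H_*(\knotexterior;V_1)$. As noted immediately before the statement, for $\rho$ irreducible metabelian Proposition~\ref{prop:explicitIrredMetabelianAd} gives $Ad\circ\rho=\psi_1\oplus\psi_2$, and this direct sum is respected by the tensor construction over $\Z[\knotgroup]$, so that $C_*(\knotexterior;\sll_\rho)=C_*(\knotexterior;V_1)\oplus C_*(\knotexterior;V_2)$ and hence $H_*(\knotexterior;\sll_\rho)\cong H_*(\knotexterior;V_1)\oplus H_*(\knotexterior;V_2)$. Therefore it suffices to prove $H_*(\knotexterior;V_1)=0$.

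The next step is to pin down the rank-one local system $V_1$. By Proposition~\ref{prop:explicitIrredMetabelianAd} the representation $\psi_1$ sends $x_i\mapsto 1$ and $\mu\mapsto -1$ on the generators of a Lin presentation. Being one dimensional, $\psi_1$ has abelian image and hence factors through $\alpha\colon\knotgroup\to\langle t\rangle\cong H_1(\knotexterior;\Z)$; since the meridian $\mu$ represents a generator of $H_1(\knotexterior;\Z)$ and $\psi_1(\mu)=-1$, we get $\psi_1(\gamma)=(-1)^{[\gamma]}$ for all $\gamma$, i.e. $\psi_1$ is the composite of $\alpha$ with the character $t\mapsto -1$. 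Identifying $V_1\cong\C$, the complex $C_*(\knotexterior;V_1)$ thus computes $H_*(\knotexterior;\C_{-1})$, the homology of $\knotexterior$ with the rank-one local system $\C_{-1}$ on which a meridian acts by $-1$.

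It then remains to check $H_*(\knotexterior;\C_{-1})=0$. Writing $\Lambda=\C[t^{\pm 1}]$ and letting $\widehat{\knotexterior}$ be the infinite cyclic cover, one has $C_*(\knotexterior;\C_{-1})\cong \Lambda/(t+1)\otimes_\Lambda C_*(\widehat{\knotexterior};\C)$ for a finite complex $C_*(\widehat{\knotexterior};\C)$ of free $\Lambda$-modules (use a finite CW structure on the compact manifold $\knotexterior$). Tensoring the resolution $0\to\Lambda\xrightarrow{t+1}\Lambda\to\Lambda/(t+1)\to 0$ with this complex yields the Milnor exact sequence
\[
\cdots\longrightarrow H_n(\widehat{\knotexterior};\C)\xrightarrow{\ t_*+1\ }H_n(\widehat{\knotexterior};\C)\longrightarrow H_n(\knotexterior;\C_{-1})\longrightarrow H_{n-1}(\widehat{\knotexterior};\C)\xrightarrow{\ t_*+1\ }H_{n-1}(\widehat{\knotexterior};\C)\longrightarrow\cdots .
\]
For a knot one has $H_0(\widehat{\knotexterior};\C)=\C$ with $t_*=\mathrm{id}$, $H_1(\widehat{\knotexterior};\C)$ a finite-dimensional $\C$-vector space on which the eigenvalues of $t_*$ are exactly the zeros of $\Delta_K(t)$, and $H_n(\widehat{\knotexterior};\C)=0$ for $n\ge 2$; moreover $H_n(\knotexterior;\C_{-1})=0$ for $n\ge 3$ since $\knotexterior$ has the homotopy type of a $2$-complex. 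Now $t_*+1$ is multiplication by $2$ on $H_0$, hence an isomorphism, and it is an isomorphism on $H_1$ precisely because $-1$ is not an eigenvalue of $t_*$, which is the statement $\Delta_K(-1)\ne 0$; and $\Delta_K(-1)$ is a nonzero (odd) integer, the determinant of $K$. Feeding these facts into the Milnor sequence gives $H_n(\knotexterior;\C_{-1})=0$ for every $n$, completing the proof.

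The substance is concentrated in the last paragraph: recognizing $\psi_1$ as the sign character of a meridian and then invoking the classical structure of the Alexander module — in particular $\Delta_K(-1)\ne 0$ — to kill $H_*(\knotexterior;\C_{-1})$. The chain-level splitting in the first step is purely formal.
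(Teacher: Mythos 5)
Your proof is correct and follows essentially the same route as the paper: split $C_*(\knotexterior;\sll_\rho)$ as $C_*(\knotexterior;V_1)\oplus C_*(\knotexterior;V_2)$ and then kill $H_*(\knotexterior;V_1)$ using the fact that $\psi_1$ is the sign character of the meridian together with $\Delta_K(-1)\neq 0$. The only difference is that the paper outsources the vanishing of $H_*(\knotexterior;V_1)$ to a cited computation of Kirk--Livingston, whereas you supply the details yourself via the Milnor exact sequence for the infinite cyclic cover.
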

\begin{proof}
  By the decomposition of the chain complex, we have the decomposition
  of the homology group:
  $$
  H_*(\knotexterior; \sll_\rho) \simeq
  H_*(\knotexterior; V_1) \oplus H_*(\knotexterior; V_2)
  $$
  The proof follows from the next Lemma~\ref{lemma:homologyV_1}.
\end{proof}
\begin{lemma}
  \label{lemma:homologyV_1}
  The homology group $H_*(\knotexterior; V_1)$ is trivial.
\end{lemma}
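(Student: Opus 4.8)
The statement to prove is Lemma~\ref{lemma:homologyV_1}: the homology $H_*(\knotexterior; V_1)$ is trivial. The key observation is that $V_1 = \langle H \rangle$ is one-dimensional over $\C$, and from Proposition~\ref{prop:explicitIrredMetabelianAd} the representation $\psi_1$ acts on it by $\psi_1(x_i)=1$ and $\psi_1(\mu)=-1$. So $\psi_1 \colon \knotgroup \to \GL_1(\C) = \C^\times$ is exactly the composition of the abelianization $\alpha \colon \knotgroup \to \langle t \rangle$ with the homomorphism $\langle t \rangle \to \C^\times$ sending $t \mapsto -1$. In other words, $C_*(\knotexterior; V_1)$ is the chain complex computing the homology of $\knotexterior$ with coefficients in the rank-one local system determined by $t \mapsto -1$, \ie twisted homology with $\C$-coefficients where a meridian acts by $-1$.

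The plan is then to compute this homology directly. First I would recall that for a local system on $\knotexterior$ given by a homomorphism $\knotgroup \to \C^\times$ factoring through $\alpha$, the twisted homology is governed by the Alexander module: concretely, $H_*(\knotexterior; V_1)$ is obtained from $H_*(\knotexterior; \C[t^{\pm 1}])$ by specializing $t \mapsto -1$, and the relevant obstruction is whether $-1$ is a root of the Alexander polynomial $\Delta_K(t)$. Since $\Delta_K(-1) = \pm |\Delta_K(-1)|$ is an odd integer (in particular nonzero), $-1$ is not a root of $\Delta_K(t)$, so the $\C$-vector space $H_1(\knotexterior; V_1)$ vanishes. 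For $H_0$: the coinvariants of $V_1$ under $\knotgroup$ are the cokernel of $\psi_1(\mu) - 1 = -1 - 1 = -2$ acting on $\C$, which is zero, so $H_0(\knotexterior; V_1) = 0$. Finally $H_i = 0$ for $i \geq 2$ because $\knotexterior$ has the homotopy type of a $2$-complex and, by the Euler characteristic argument together with Poincaré–Lefschetz duality (the boundary torus contributes nothing since the meridian acts nontrivially on $V_1$), the remaining homology must also vanish. Alternatively, and more cleanly, I would phrase the whole computation via Wada-type / Turaev-type reasoning: the nonvanishing of $\det(\Phi(\mu - 1)) = \det(-2) \neq 0$ together with $\Delta_K(-1) \neq 0$ forces the twisted chain complex over $\C$ to be acyclic.

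The routine parts here are the identification of $\psi_1$ with the $t \mapsto -1$ specialization and the standard homological algebra of rank-one local systems on knot exteriors. The one point that needs a little care — and which I regard as the main (if minor) obstacle — is cleanly justifying the vanishing of $H_2(\knotexterior; V_1)$: one must invoke that $\knotexterior$ is aspherical of dimension $2$ up to homotopy, so $H_i = 0$ for $i > 2$ automatically, and then use either the Euler characteristic $\chi(\knotexterior) = 0$ (forcing $\dim H_0 - \dim H_1 + \dim H_2 = 0$, hence $H_2 = 0$ once $H_0 = H_1 = 0$) or half-lives-half-dies duality applied to the pair $(\knotexterior, \boundaryTorus)$, noting that $H_*(\boundaryTorus; V_1) = 0$ since the meridian acts by $-1 \neq 1$ on the torus' rank-one local system. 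Either route is short. I would present the $H_0$ and $H_1$ computations explicitly via the fact that $-2$ and $\Delta_K(-1)$ are nonzero, and dispatch the higher homology by the Euler characteristic remark.
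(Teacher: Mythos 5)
Your proposal is correct and follows essentially the same route as the paper: the paper identifies $\psi_1$ as the nontrivial one-dimensional representation sending $\mu\mapsto -1$ and then cites the computation of Example~2 in the Kirk--Livingston reference together with $\Delta_K(-1)\neq 0$, which is precisely the specialization-at-$t=-1$ argument you carry out explicitly (coinvariants for $H_0$, $\Delta_K(-1)\neq 0$ for $H_1$, Euler characteristic for the rest). You have simply filled in the details that the paper delegates to the citation.
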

\begin{proof}
  The representation $\psi_1$ is the non--trivial $1$-dimensional representation
  such that a meridian $\mu$ is sent to $-1$.
  Our claim  follows from the computation of example~$2$ in~\cite{KL} and $\Delta_K(-1) \not = 0$.
\end{proof}
\begin{remark}
  The conjugation between $\rho$ and $\twistingOp{\rho}$
  induces the linear isomorphism $-\mathrm{id}$ on $H_1(\knotexterior;V_2)$
  (see Remark~\ref{remark:conjugation_vector_space}).
\end{remark}

The longitude--regularity of $\rho$ is defined concerning a basis of $H_1(\knotexterior;\sll_\rho)$
It was introduced by J.~Porti in~\cite{Porti:1997}. Here we follow the definition in~\cite{JDFourier, YY1}.
\begin{definition}
  \label{def:longitude_regular}
  An $\SL$-representation $\rho$ is longitude--regular if it is
  irreducible and satisfies the following two conditions on the local
  system with the coefficient $\sll_\rho$:
  \begin{enumerate}
  \item $\dim_{\C} H_1(\knotexterior;\sll_\rho) = 1$,
  \item the homomorphism $H_1(\lambda;\sll_{\rho}) \to
    H_1(\knotexterior;\sll_{\rho})$, induced by the inclusion $\lambda
    \to \knotexterior$, is surjective.  Here $\lambda$ is the
    preferred longitude on $\boundaryTorus$ and
    $H_1(\lambda;\sll_{\rho})$ is the homology group for the local
    system of the circle $\lambda$, given by restricting $\rho$ on the subgroup $\langle
    \lambda \rangle \subset \knotgroup$.
  \item If $\rho(\pi_1(\boundaryTorus))$ is contained in $\Para$ by
    conjugation, then $\rho(\lambda) \not = \pm \I$.
  \end{enumerate}
\end{definition}
The conjugation preserves the longitude regularity
of irreducible representations. 
Proposition~\ref{prop:explicitIrredMetabelian} shows that
every irreducible metabelian $\SL$-representation sends meridians to trace
free matrices. Thus we ignore the third condition in
Definition~\ref{def:longitude_regular} for irreducible metabelian
representations.  
Proposition~\ref{prop:DimLocalSystem} shows that
all irreducible metabelian representations satisfy the condition on
the dimension of local system 
in Definition~\ref{def:longitude_regular}.

We give more details on the induced homomorphism from
$H_1(\lambda;\sll_{\rho})$ to the twisted homology group $H_1(\knotexterior;\sll_{\rho})$.
It is known that the homotopy class of the preferred longitude $\lambda$ is
contained in the second commutator subgroup of $\knotgroup$.  If an
$\SL$-representation $\rho$ of $\knotgroup$ is metabelian, the
preferred longitude $\lambda$ is sent to the identity matrix $\I$ by
$\rho$.  Hence we can identify the local system $C_*(\lambda;
\sll_\rho)$ with $\sll \otimes_{\Z} C_*(\lambda; \Z)$.
It is easy to see the twisted homology group for $\lambda$.
\begin{lemma}
  \label{lemma:HomologyLambda}
  If $\rho$ is an irreducible metabelian representation of $\knotgroup$ into
  $\SL$, then the homology group of $C_*(\lambda; \sll_\rho)$ is
  expressed as
  $$
  H_*(\lambda;\sll_\rho) =
  \begin{cases}
    \sll & * = 0\, \hbox{or}\, 1, \\
    \bm{0} & \hbox{otherwise}.
  \end{cases}
  $$
\end{lemma}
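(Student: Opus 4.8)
The statement to prove is Lemma~\ref{lemma:HomologyLambda}, computing the homology of the local system $C_*(\lambda;\sll_\rho)$ where $\lambda$ is the preferred longitude. The key observation, already recorded in the text preceding the lemma, is that for a metabelian $\SL$-representation $\rho$ the longitude $\lambda$ lies in the second commutator subgroup of $\knotgroup$ and hence $\rho(\lambda)=\I$; therefore $Ad\circ\rho(\lambda)=\mathrm{id}$ on $\sll$, so the local system $C_*(\lambda;\sll_\rho)$ is \emph{untwisted}. The plan is simply to identify $C_*(\lambda;\sll_\rho)$ with $\sll\otimes_{\Z}C_*(S^1;\Z)$ and read off the homology.

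**Key steps.** First I would recall that $\lambda$ generates an infinite cyclic subgroup $\langle\lambda\rangle\cong\Z$ of $\knotgroup$, and the circle realizing $\lambda$ has universal cover $\R$ with the standard $\Z$-equivariant CW structure: $C_1(\widetilde{S^1};\Z)\xrightarrow{\partial}C_0(\widetilde{S^1};\Z)$ is the map $\Z[\Z]\xrightarrow{\,\cdot(\lambda-1)\,}\Z[\Z]$. Second, since $\rho(\lambda)=\I$, the right $\Z[\langle\lambda\rangle]$-module structure on $\sll$ coming from $Ad\circ\rho^{-1}$ is trivial: $\lambda$ acts as the identity. Hence the twisted complex $C_*(\lambda;\sll_\rho)=\sll\otimes_{\Z[\langle\lambda\rangle]}C_*(\widetilde{S^1};\Z)$ becomes $\sll\otimes_{\Z[\langle\lambda\rangle]}(\Z[\Z]\xrightarrow{\lambda-1}\Z[\Z])$, and the boundary operator is multiplication by $\rho(\lambda)^{-1}-\I=0$, i.e. the zero map $\sll\to\sll$. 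Third, reading off homology of the two-term complex $\sll\xrightarrow{\,0\,}\sll$ gives $H_0=\sll$, $H_1=\sll$, and $H_i=0$ for $i\neq 0,1$, which is exactly the claimed formula.

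**Main obstacle.** There is no serious obstacle here; the content is entirely the vanishing $\rho(\lambda)=\I$, which follows from the metabelian hypothesis together with the classical fact that $[\lambda]\in\knotgroup''$ (the second commutator subgroup). The only point requiring a word of care is making precise the identification of the cellular chain complex of the circle with $\Z[\Z]\xrightarrow{\lambda-1}\Z[\Z]$ and checking that, after tensoring down, the differential is genuinely zero rather than merely nilpotent — but since $\rho(\lambda)=\I$ exactly (not just up to conjugacy or up to unipotency), this is immediate. One should also note that the conclusion does not use irreducibility of $\rho$ beyond what is needed to invoke $\rho(\lambda)=\I$; the word ``irreducible'' in the statement is harmless.
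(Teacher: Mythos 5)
Your proposal is correct and follows exactly the paper's (largely implicit) argument: the paper establishes just before the lemma that $\lambda\in\knotgroup''$ forces $\rho(\lambda)=\I$ for metabelian $\rho$, identifies $C_*(\lambda;\sll_\rho)$ with the untwisted complex $\sll\otimes_{\Z}C_*(\lambda;\Z)$, and leaves the resulting computation of the homology of a circle with trivial $3$-dimensional coefficients to the reader. Your write-up merely spells out the cellular complex $\Z[\Z]\xrightarrow{\lambda-1}\Z[\Z]$ and the vanishing of the differential, which is exactly the intended reasoning.
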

\begin{remark}
  \label{remark:ConditionLongitudeRegular}
  This Lemma~\ref{lemma:HomologyLambda} says that an irreducible
  metabelian 
  representation $\rho$ is longitude--regular if and
  only if there exists some $\bm{v} \in \sll$ such that $\bm{v}
  \otimes \lambda$ gives a non--trivial element in
  $H_1(\knotexterior;\sll_\rho)$.
  Here we use the same symbol $\lambda$ for a lift in a universal cover for simplicity of notation.
\end{remark}

By the long exact sequence for the pair $(\knotexterior,
\boundaryTorus)$ and Proposition~\ref{prop:DimLocalSystem} \&
\ref{prop:ReductionLocalSystem}, we have the following short exact
sequence of homology groups with coefficient $V_2$:
$$
0 \to 
H_2(\knotexterior, \boundaryTorus; V_2) \xrightarrow{\delta_*}
H_1(\boundaryTorus; V_2) \xrightarrow{i_*}
H_1(\knotexterior; V_2) \to
0
$$
and 
$$
\dim_\C \ker i_* = 1, \quad  
\dim_\C \im i_* = 1.
$$
By direct calculation, we have a basis of $H_1(\boundaryTorus; V_2)$ as follows:
\begin{lemma}
  Let $\bm{v}_1$ be an eigenvector $E-F$ in $V_2 = \langle E, F \rangle$ for
  the eigenvalue $1$ of $\psi_2(\mu)$.  The homology group
  $H_1(\boundaryTorus; V_2)$ is generated by the homology classes of
  $\bm{v}_1 \otimes \lambda$ and $\bm{v}_1 \otimes \mu$.  Here we
  denote by the same symbols $\lambda$ and $\mu$ a lifts in the
  universal cover.
\end{lemma}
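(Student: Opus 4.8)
The plan is to compute the homology $H_1(\boundaryTorus; V_2)$ directly from a CW-structure on the torus $\boundaryTorus$, using the fact established just above that $\rho(\lambda) = \I$ and that $\rho(\mu)$ — hence $\psi_2(\mu)$ — is explicit. Since $\rho$ is metabelian, the restriction of $\rho$ to $\pi_1(\boundaryTorus) = \langle \mu, \lambda \rangle$ sends $\lambda$ to $\I$, so on the torus the only nontrivial action is that of $\mu$ via $\psi_2(\mu) = \left(\begin{smallmatrix} 0 & -1 \\ -1 & 0\end{smallmatrix}\right)$. First I would fix the standard two-cell CW-structure on $T^2 = \boundaryTorus$ with one $0$-cell, two $1$-cells labelled $\mu$ and $\lambda$, and one $2$-cell attached along $\mu\lambda\mu^{-1}\lambda^{-1}$, and write down the twisted chain complex $C_*(\boundaryTorus; V_2) = V_2 \otimes_{\psi_2^{-1}} C_*(\widetilde{\boundaryTorus}; \Z)$ with boundary maps computed by Fox calculus. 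Concretely $\partial_2$ is governed by $\partial(\mu\lambda\mu^{-1}\lambda^{-1})/\partial\mu$ and $/\partial\lambda$, which evaluate under $\psi_2$ to $\I - \psi_2(\mu)$ and $\psi_2(\mu) - \I$ respectively (using $\psi_2(\lambda) = \I$); and $\partial_1$ sends the $\mu$-edge to $(\psi_2(\mu) - \I)$ acting on $V_2$ and the $\lambda$-edge to $(\psi_2(\lambda) - \I) = 0$.

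The key computation is then to diagonalize $\psi_2(\mu)$. Its eigenvalues are $\pm 1$, with $\bm{v}_1 = E - F$ spanning the $+1$-eigenspace and $E + F$ spanning the $-1$-eigenspace. On the $+1$-eigenspace, $\psi_2(\mu) - \I = 0$, so that summand of $C_*(\boundaryTorus; V_2)$ is exactly $\C \otimes_\Z C_*(\widetilde{\boundaryTorus}; \Z)$ with trivial action, giving ordinary homology $H_*(T^2; \C)$: so $H_1$ of this summand is $2$-dimensional, spanned by the classes of $\bm{v}_1 \otimes \mu$ and $\bm{v}_1 \otimes \lambda$. On the $-1$-eigenspace, $\psi_2(\mu) - \I = -2\,\mathrm{id}$ is invertible; a short diagram chase (or the Wang-sequence / mapping-torus description of $T^2$ as the mapping torus of $\mu$ acting on the circle $\lambda$) shows the homology of that summand vanishes in all degrees. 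Adding the two summands gives $H_1(\boundaryTorus; V_2) \cong \C^2$ generated by $\bm{v}_1 \otimes \mu$ and $\bm{v}_1 \otimes \lambda$, which is exactly the claim.

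The only mild obstacle is bookkeeping: making sure the Fox-derivative boundary maps are written with the correct left/right module conventions (the coefficients act by $\psi_2^{-1}$, so one must be careful about whether $\psi_2(\mu)$ or $\psi_2(\mu)^{-1} = \psi_2(\mu)$ appears — here they coincide since $\psi_2(\mu)$ is an involution, which is a convenient simplification), and confirming that the two cycles $\bm{v}_1 \otimes \mu$ and $\bm{v}_1 \otimes \lambda$ are genuinely nontrivial rather than boundaries, which follows because $\partial_2$ lands entirely in the $-1$-eigenspace summand (its image is spanned by vectors of the form $(\psi_2(\mu) - \I)\bm{w} \otimes (\cdot)$, which are orthogonal to $\bm{v}_1$). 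No deeper input is needed; everything reduces to linear algebra on the two-dimensional space $V_2$ together with the standard chain model of the torus.
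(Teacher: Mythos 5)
The paper offers no argument for this lemma beyond the phrase ``by direct calculation,'' and your proposal is precisely that calculation: the standard CW-structure on the torus, Fox calculus for the boundary maps, and the splitting of $V_2$ into the $\pm 1$-eigenspaces of $\psi_2(\mu)$ (with $\psi_2(\lambda)=\I$ because the preferred longitude lies in the second commutator subgroup). The strategy and the conclusion are both right. One slip to fix: $\partial(\mu\lambda\mu^{-1}\lambda^{-1})/\partial\mu = 1-\mu\lambda\mu^{-1}$, which under $\psi_2$ evaluates to $\I-\psi_2(\mu\lambda\mu^{-1})=\I-\psi_2(\lambda)=0$, not to $\I-\psi_2(\mu)$ as you wrote; with your stated value the composite $\partial_1\circ\partial_2$ would not vanish, which is the quickest way to catch the error. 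With the corrected $\partial_2$ (namely $\bm{v}\mapsto\bigl(0,(\psi_2(\mu)-\I)\bm{v}\bigr)$) everything else in your outline goes through: the image of $\partial_2$ is the line $0\oplus\langle E+F\rangle$ in the $(-1)$-eigenspace, $\ker\partial_1=\langle E-F\rangle\oplus V_2$ is three-dimensional, and the quotient is spanned by the classes of $\bm{v}_1\otimes\mu$ and $\bm{v}_1\otimes\lambda$ as claimed.
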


We can write any element of $\ker i_*$ as 
$a [\bm{v}_1 \otimes \lambda] + b [\bm{v}_1 \otimes \mu]$.  
This pair of complex numbers determines
the point $[a:b]$ in the projective space $\proj$.  From
Remark~\ref{remark:ConditionLongitudeRegular}, it follows that an
irreducible metabelian $\SL$-representation $\rho$ is
longitude--regular if and only if $[a:b] \not = [1:0]$.
We can thus express longitude regularity as the condition that the ratio $b/a$ is non--zero.

The ratio $b/a$ can be expressed as a ratio of $1$-forms on the set of 
conjugacy classes of irreducible representations.
For each $\gamma \in \knotgroup$, we define the function $I_\gamma$ on
$\SL$-representations as $I_\gamma(\rho) = \trace \rho(\gamma)$ where
$\rho$ is an $\SL$-representation.  By the invariance of trace under
conjugation, this function $I_\gamma$ also gives a function on the set
of conjugacy classes of $\SL$-representations.  
Then we have the following condition for an irreducible metabelian
$\SL$-representation to be longitude--regular.
We refer to
\cite[Proposition~4.7]{Porti:1997} and its proof for the details.
\begin{proposition}
  Suppose that the function $I_\lambda$ is determined by $I_\mu$ near
  the conjugacy class of an irreducible metabelian
  $\SL$-representation $\rho$.  The ratio $b/a$ satisfies
  \begin{equation}
    \label{eqn:CoeffChange}
    \left(\frac{b}{a}\right)^2 = 
    \frac{I_\mu^2 -4}{I_\lambda^2 -4} 
    \left(
      \frac{d I_\lambda}{d I_\mu}
    \right)^2
  \end{equation}
  Therefore $b/a$ does not vanish at $(I_\lambda, I_\mu) = (2, 0)$ if
  and only if an irreducible metabelian $\SL$-representation $\rho$ is
  longitude--regular.
\end{proposition}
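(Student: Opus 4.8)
The plan is to relate the ratio $b/a$, which obstructs longitude--regularity, to the differential of the trace function $I_\lambda$ with respect to $I_\mu$ via Porti's change--of--curve formula, and then to track what happens at the distinguished point $(I_\lambda, I_\mu) = (2, 0)$ corresponding to an irreducible metabelian representation. First I would recall Porti's setup from~\cite[Proposition~4.7]{Porti:1997}: for a curve of characters parametrized near the given one, the cohomology class dual to a simple closed curve $\gamma$ on $\boundaryTorus$ pairs with the deformation, and the ratio of the coefficients of $\lambda$ and $\mu$ in the image of the fundamental class under the boundary map is governed by the derivative $dI_\gamma$. The key input is that, because $\rho$ is metabelian, $\rho(\lambda) = \I$ and $\rho(\mu)$ is trace--free, so $(I_\lambda, I_\mu) = (2, 0)$; moreover the local system $C_*(\boundaryTorus; V_2)$ is genuinely untwisted along $\lambda$ (since $\psi_2(\lambda) = 1$), which is what lets us read off the basis $\bm{v}_1 \otimes \lambda$, $\bm{v}_1 \otimes \mu$ of $H_1(\boundaryTorus; V_2)$ from the preceding lemma.

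The main computational step is to establish the identity~\eqref{eqn:CoeffChange}. Here I would follow Porti: express $a$ and $b$ as periods of a closed $1$-form (the variation of the holonomy restricted to $\boundaryTorus$) against $\lambda$ and $\mu$ respectively; then the standard formula for the differential of the trace function, $dI_\gamma = \pm \sqrt{I_\gamma^2 - 4}\cdot(\text{logarithmic derivative of the eigenvalue of }\rho(\gamma))$, converts the period ratio into the ratio of eigenvalue--derivatives, which in turn is $dI_\lambda / dI_\mu$ up to the Jacobian factor $\sqrt{(I_\mu^2 - 4)/(I_\lambda^2 - 4)}$. Squaring removes the sign ambiguity, yielding~\eqref{eqn:CoeffChange}. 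Throughout one uses that near the metabelian character the curve of characters is $1$-dimensional (this is Proposition~\ref{prop:DimLocalSystem} together with the short exact sequence displayed just above), so that $I_\lambda$ is indeed locally a function of $I_\mu$ and $dI_\lambda/dI_\mu$ makes sense.

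Finally I would evaluate at $(I_\lambda, I_\mu) = (2, 0)$. The factor $I_\mu^2 - 4$ equals $-4$, which is nonzero; the factor $I_\lambda^2 - 4$ equals $0$, so the formula must be read as saying that $(b/a)^2$ vanishes precisely when $dI_\lambda/dI_\mu$ vanishes to sufficiently high order — more precisely, $b/a$ is nonzero at the metabelian point if and only if $(dI_\lambda/dI_\mu)^2/(I_\lambda^2 - 4)$ has a nonzero finite limit there, which is exactly the condition that the character curve is not tangent (to the relevant order) to the locus $\{I_\lambda = 2\}$. Combining this with Remark~\ref{remark:ConditionLongitudeRegular} — that $\rho$ is longitude--regular iff $[a:b] \neq [1:0]$, i.e.\ iff $b/a \neq 0$ — gives the stated equivalence.

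The subtle point, and the one I expect to be the main obstacle, is the behaviour at $I_\lambda = 2$: the formula~\eqref{eqn:CoeffChange} has a zero in the denominator there, so one cannot simply ``plug in.'' The careful statement is that one takes the limit along the character curve, and $b/a$ being nonzero is equivalent to $I_\lambda - 2$ vanishing to exactly second order in $I_\mu$ (matching the order of the $\sqrt{I_\lambda^2-4}$ in the numerator of $dI_\lambda$). Making this limiting argument rigorous — i.e.\ justifying that the indeterminate form on the right of~\eqref{eqn:CoeffChange} is the correct transcription of the honest period ratio $b/a$, which is manifestly finite — is where Porti's original analysis in~\cite{Porti:1997} must be invoked verbatim rather than merely cited in spirit, and it is the place where the metabelian hypothesis ($\rho(\lambda) = \I$, so the longitude direction is untwisted) does the essential work in guaranteeing that the period against $\lambda$ is well defined and that the dual $1$-form extends across the apparent singularity.
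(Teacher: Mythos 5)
Your proposal follows the same route as the paper, which for this proposition simply defers to Porti's Proposition~4.7 and its proof: you identify $b/a$ with the period ratio coming from the boundary exact sequence, convert it to $dI_\lambda/dI_\mu$ via $dI_\gamma = \pm\sqrt{I_\gamma^2-4}\,d\log\xi_\gamma$, and square to kill the sign, then resolve the apparent indeterminacy at $(I_\lambda,I_\mu)=(2,0)$ by a limit along the character curve. This matches the paper's intent (and is confirmed by its trefoil and figure-eight computations, where the right-hand side of \eqref{eqn:CoeffChange} simplifies to a nonzero constant after cancellation), so no further comparison is needed.
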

For example, we can see the relation 
$I_\lambda = - (I_\mu^6 - 6 I_\mu^4 + 9 I_\mu^2 -2)$ in the case that $K$ is the trefoil knot.   
The right hand side of~\eqref{eqn:CoeffChange} turns into 
$36$. This means that the ratio $b/a$ is not zero at $(I_\lambda, I_\mu) = (2, 0)$.
Hence every irreducible metabelian $\SL$-representation of the trefoil knot group is longitude--regular.
In the case that $K$ is the figure eight knot,
one can see the relation $I_\lambda = I_\mu^4 -5 I_\mu^2 +2$
(we refer to~\cite[Section~4.5, Example 1]{Porti:1997}).
The right hand side of~\eqref{eqn:CoeffChange} turns into 
$4(2 I_\mu^2 - 5)^2 / (I_\mu^2 - 1)(I_\mu^2 - 5)$.
This means that the ratio $b/a$ is not zero at $(I_\lambda, I_\mu) = (2, 0)$.
Hence every irreducible metabelian $\SL$-representation of the figure eight knot group is longitude--regular.

\section{Examples}
\label{section:examples}
We show four computation examples from Rolfsen's table of knots.  The
first two examples are the trefoil knot and the figure eight knot, for
which the extra factor $P(t)$ in
Theorem~\ref{thm:twistedAlexanderAdMeta} is trivial.  
Two knots where $P(t)$ is non--trivial are the knots $5_1$ and $5_2$.
In each example, the representation $\assoc{\rho}_i$ associated with
$\rho_i$ is given by $\rho_{i+1}$ cyclically.

\subsection{Trefoil knot}
Let $K$ be the trefoil knot.
We consider the free Seifert surface illustrated as in Figure~\ref{fig:SpineTrefoil}.
The Lin presentation associated to this Seifert surface is expressed as 
\begin{equation}
  \label{eqn:LinPresentationTrefoil}
  \knotgroup
  =
  \langle
  x_1, x_2, \mu \,|\,
  \mu x_1 \mu^{-1} = x_1 x_2^{-1},\,
  \mu x_2^{-1} x_1 \mu^{-1} = x_2^{-1}
  \rangle.
\end{equation}

The Alexander polynomial $\Delta_K(t)$ is given by $t^2 -t +1$.  The
knot determinant is $\Delta_K(-1) = 3$.  Since the number of conjugacy
classes of irreducible metabelian representations is given by
$(|\Delta_K(-1)| - 1)/2$, we have only one irreducible metabelian
representation, up to conjugation.  By
Proposition~\ref{prop:TwistedAlexIrrdMetabelian} and the relations in
the Lin presentation~$(\ref{eqn:LinPresentationTrefoil})$, we have the
following representative in the conjugacy class of the irreducible
metabelian representation:
$$
\rho:
x_1 \mapsto
\begin{pmatrix}
  \zeta_3 & 0 \\
  0 & \zeta_3^{-1}
\end{pmatrix},\quad
x_2 \mapsto
\begin{pmatrix}
  \zeta_3^2 & 0 \\
  0 & \zeta_3^{-2}
\end{pmatrix},\quad 
\mu  \mapsto
\begin{pmatrix}
  0 & 1 \\
  -1 & 0
\end{pmatrix}
$$
where $\zeta_3=\exp (2\pi\sqrt{-1}/3)$.
Given that the generator $\mu$ satisfies that $\alpha(\mu)\not = 1$, 
the twisted Alexander polynomials $\twistedAlex{E_K}{\rho}$ and $\twistedAlexAd{E_K}{\rho}$ 
are expressed as
$$
\twistedAlex{E_K}{\rho}
= \frac{
  \det \left( \Phi_\rho \left( \frac{\displaystyle \partial r_i}{\displaystyle \partial x_j} \right) \right)_{\substack{1 \leq i \leq 2, \\  1 \leq j \leq 2}}
}{
  \det (\Phi_\rho (\mu - 1))
},\quad
\twistedAlexAd{E_K}{\rho}
=\frac{
  \det \left( \Phi_{Ad \circ \rho} \left( \frac{\displaystyle \partial r_i}{\displaystyle \partial x_j} \right) \right)_{\substack{1 \leq i \leq 2, \\  1 \leq j \leq 2}}
}{
  \det ( \Phi_{Ad \circ \rho} (\mu - 1))
}
$$
where $r_1 = \mu x_1 \mu^{-1} x_2 x_1^{-1}$ and $r_2 = \mu x_2^{-1} x_1 \mu^{-1} x_2$.
Then we have, up to a factor $\pm t^n$ ($n \in \Z$),
\begin{align*}
  \twistedAlex{E_K}{\rho}
  &= t^2 + 1,\\
  \twistedAlexAd{E_K}{\rho}
  &=
  (t-1)(t^2 + t + 1)\\
  &= (t-1)\Delta_K(-t).
\end{align*}

\subsection{Figure eight knot}
Let $K$ be the figure eight knot.
We consider the free Seifert surface illustrated as in Figure~\ref{fig:SeifertSurfaceFigure8}.
The Lin presentation associated to this Seifert surface is expressed as 
\begin{equation}
  \label{eqn:LinPresentationFigure8}
  \knotgroup
  =
  \langle
  x_1, x_2, \mu \,|\,
  \mu x_1 \mu^{-1} = x_1 x_2^{-1},\,
  \mu x_2 x_1 \mu^{-1} = x_2
  \rangle.
\end{equation}

\begin{figure}[!ht]
  \begin{center}
    \includegraphics[scale=.5]{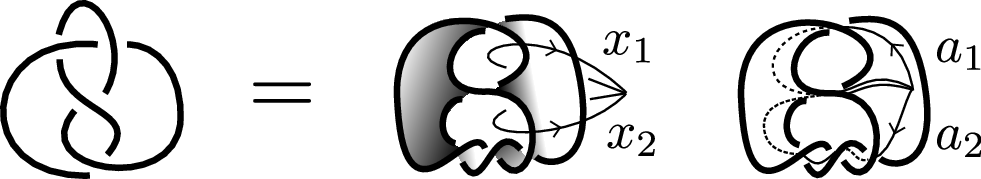}
  \end{center}
  \caption{A free Seifert surface of the figure eight knot}\label{fig:SeifertSurfaceFigure8}
\end{figure}

The Alexander polynomial $\Delta_K(t)$ is given by $t^2 -3t +1$.  The
knot determinant is $\Delta_K(-1) = 5$.  Since the number of conjugacy
classes of irreducible metabelian representations is given by
$(|\Delta_K(-1)| - 1)/2$, we have two irreducible metabelian
representations, up to conjugation.  By
Proposition~\ref{prop:explicitIrredMetabelian} and the relations in
the Lin presentation~\eqref{eqn:LinPresentationFigure8},
we have the following representatives $\rho_1$ and $\rho_2$ in the two conjugacy classes
of irreducible metabelian representations:
$$
\rho_k \co
x_1 \mapsto
\begin{pmatrix}
  \zeta_5^k & 0 \\
  0 & \zeta_5^{-k}
\end{pmatrix},\quad
x_2 \mapsto
\begin{pmatrix}
  \zeta_5^{2k} & 0 \\
  0 & \zeta_5^{-2k}
\end{pmatrix},\quad 
\mu  \mapsto
\begin{pmatrix}
  0 & 1 \\
  -1 & 0
\end{pmatrix}
\quad (k=1, 2)
$$
where $\zeta_5=\exp (2\pi\sqrt{-1}/5)$.
Then we have the same twisted Alexander polynomials $\twistedAlex{E_K}{\rho_k}$ and $\twistedAlexAd{E_K}{\rho_k}$ 
for both $k=1, 2$.
They are expressed, up to a factor $\pm t^n (n \in \Z)$, as
\begin{align*}
  \twistedAlex{E_K}{\rho_k}
  &= t^2 + 1,\\
  \twistedAlexAd{E_K}{\rho_k}
  &=
  (t-1)(t^2 + 3t + 1)\\
  &= (t-1)\Delta_K(-t).
\end{align*}

\subsection{$5_1$ knot ($(2, 5)$--torus knot)}
Let $K$ be the $5_1$ knot as in the table of Rolfsen.
This knot is the torus knot with type $(2, 5)$.
We consider the free Seifert surface illustrated as in Figure~\ref{fig:SeifertSurface5_1}.
The Lin presentation associated to this Seifert surface is expressed as 
\begin{equation}
  \label{eqn:LinPresentation5_1}
  \knotgroup
  =
  \left\langle
    x_1, x_2, x_3, x_4, \mu \, \left|\,
      \begin{array}{cc}
        \mu x_1 \mu^{-1} = x_1 x_2^{-1},&
        \mu x_2^{-1} x_1 \mu^{-1} = x_3 x_2^{-1},\\
        \mu x_2^{-1} x_3 \mu^{-1} = x_3 x_4^{-1},&
        \mu x_4^{-1} x_3 \mu^{-1} = x_4^{-1}
      \end{array}
    \right.
  \right\rangle.
\end{equation}

\begin{figure}[!ht]
  \begin{center}
    \includegraphics[scale=.5]{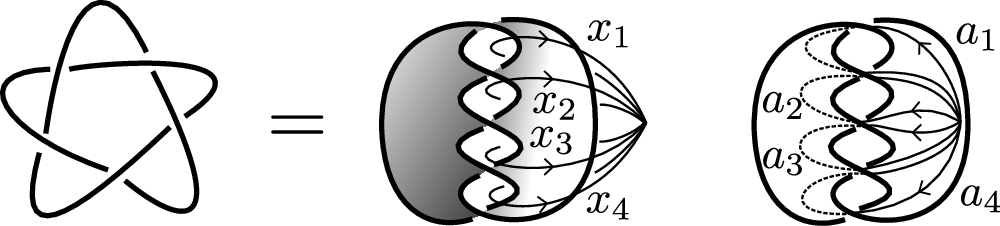}
  \end{center}
  \caption{A free Seifert surface of $5_1$ knot}\label{fig:SeifertSurface5_1}
\end{figure}

The Alexander polynomial $\Delta_K(t)$ is given by $t^4 -t^3 +t^2 -t+1$.
The knot determinant is $\Delta_K(-1) = 5$.
Since the number of conjugacy classes of irreducible metabelian representations is given by $(|\Delta_K(-1)|-1)/2$,
we have two irreducible metabelian representations, up to conjugation.
Proposition~\ref{prop:explicitIrredMetabelian} and the relations in
the Lin presentation~$(\ref{eqn:LinPresentationTrefoil})$,
we have the following representatives $\rho_1$ and $\rho_2$ in those two conjugacy classes
of irreducible metabelian representations:
$$
\rho_k \co
x_i \mapsto
\begin{pmatrix}
  \big(\zeta_5^k \big)^i & 0\\
  0 & \big(\zeta_5^{-k}\big)^i
\end{pmatrix},\quad
\mu \mapsto 
\begin{pmatrix}
  0 & 1 \\
  -1 & 0
\end{pmatrix}
\quad
(k=1, 2)
$$
where $\zeta_5 = \exp (2\pi\sqrt{-1}/5)$.  Then we have the following
list of the twisted Alexander polynomial $\twistedAlex{E_K}{\rho_k}$
and $\twistedAlexAd{E_K}{\rho_k}$ for $k=1, 2$:
\begin{center}
  \renewcommand{\arraystretch}{1.5}
  \begin{tabular}{c|c|c}
    & $\twistedAlexAd{\knotexterior}{\rho_k}$ & $\twistedAlex{\knotexterior}{\rho_k}$ \\
    \hline
    $k=1$ & $(t-1)\Delta_K(-t)(t^2 - \zeta_5)(t^2 - \zeta_5^{-1})$ & $(t^2+1)(t^2 + \zeta_5^2)(t^2 + \zeta_5^{-2})$ \\
    $k=2$ & $(t-1)\Delta_K(-t)(t^2 - \zeta_5^2)(t^2 - \zeta_5^{-2})$ & $(t^2+1)(t^2 + \zeta_5)(t^2 + \zeta_5^{-1})$
  \end{tabular}
  \renewcommand{\arraystretch}{1}
\end{center}

\subsection{$5_2$ knot}
Let $K$ be $5_2$ knot in the table of Rolfsen.
We consider the free Seifert surface illustrated as in Figure~\ref{fig:SeifertSurface5_2}.
The Lin presentation associated to this Seifert surface is expressed as 
\begin{equation}
  \label{eqn:LinPresentation5_2}
  \knotgroup
  =
  \langle
  x_1, x_2, \mu \,|\,
  \mu x_1 \mu^{-1} = x_1 x_2^{-1},\,
  \mu x_2^{-2} x_1 \mu^{-1} = x_2^{-2}
  \rangle.
\end{equation}

\begin{figure}[!ht]
  \begin{center}
    \includegraphics[scale=.5]{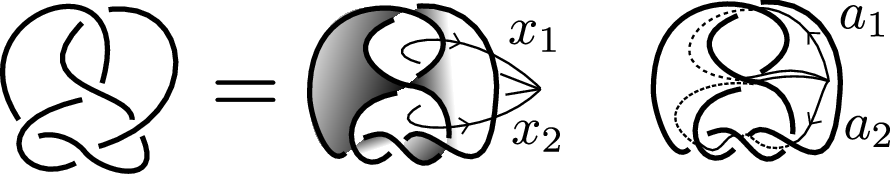}
  \end{center}
  \caption{A free Seifert surface of $5_2$ knot}\label{fig:SeifertSurface5_2}
\end{figure}

The Alexander polynomial $\Delta_K(t)$ is given by $2t^2 -3t +2$.  The
knot determinant is $\Delta_K(-1) = 7$.  Since the number of conjugacy
classes of irreducible metabelian representations is given by
$(|\Delta_K(-1)| - 1)/2$, we have three irreducible metabelian
representations, up to conjugation.  By
Proposition~\ref{prop:explicitIrredMetabelian} and the relations in
the Lin presentation~$(\ref{eqn:LinPresentationTrefoil})$,
we have the following representatives $\rho_1$, $\rho_2$ and $\rho_3$ in those three conjugacy classes
of irreducible metabelian representations:
$$
\rho_k:
x_1 \mapsto
\begin{pmatrix}
  \zeta_7^k & 0 \\
  0 & \zeta_7^{-k}
\end{pmatrix},\quad
x_2 \mapsto
\begin{pmatrix}
  \zeta_7^{2k} & 0 \\
  0 & \zeta_7^{-2k}
\end{pmatrix},\quad 
\mu  \mapsto
\begin{pmatrix}
  0 & 1 \\
  -1 & 0
\end{pmatrix}
\quad (k=1, 2, 3)
$$
where $\zeta_7 = \exp (2\pi\sqrt{-1}/7)$.
Then we have the following list of the twisted Alexander polynomial $\twistedAlex{E_K}{\rho_k}$
and $\twistedAlexAd{E_K}{\rho_k}$ 
for $k=1, 2, 3$:
\begin{center}
  \renewcommand{\arraystretch}{1.5}
  \begin{tabular}{c|c|c}
    & $\twistedAlexAd{\knotexterior}{\rho_k}$ & $\twistedAlex{\knotexterior}{\rho_k}$ \\
    \hline
    $k=1$ & $(t-1)(\zeta_7^3 + \zeta_7^{-3} + 2)\Delta_K(-t)$ & $(\zeta_7^2 + \zeta_7^{-2} + 2)(t^2+1)$ \\
    $k=2$ & $(t-1)(\zeta_7 + \zeta_7^{-1} + 2)\Delta_K(-t)$ & $(\zeta_7^3 + \zeta_7^{-3} + 2)(t^2+1)$ \\
    $k=3$ & $(t-1)(\zeta_7^2 + \zeta_7^{-2} + 2)\Delta_K(-t)$ & $(\zeta_7 + \zeta_7^{-1} + 2)(t^2+1)$ 
  \end{tabular}
  \renewcommand{\arraystretch}{1}
\end{center}
where $\zeta_7 = \exp(2\pi\sqrt{-1}/7)$.

\section*{Acknowledgments}
This research was supported by 
Research Fellowships of the Japan Society for the Promotion of Science for Young Scientists.
The author would like to express his thanks to Stephan Friedl for his useful comments at RIMS seminar 
\lq\lq Twisted topological invariants and topology of low--dimensional manifolds\rq\rq.
The author wishes to express his thanks to Kunio Murasugi for his helpful comments and suggestions.
The author also would like to express his gratitude to the referee for
his or her careful reading and many comments to improve this paper.
\bibliographystyle{amsalpha}
\bibliography{twistedAlexanderMetabelian}

\end{document}